%%%%%%%%%%%%%%%%%%%%%%%%%%%%%%%%%%%%%%%%%%%%%%%%%%%%
% Nonlocal diffusion in domains with holes
% C. Cort\'{a}zar, M. Elgueta, F. Quir\'{o}s and N. Wolanski
% Started: november 2009
% Final version: 3.11.2011
%%%%%%%%%%%%%%%%%%%%%%%%%%%%%%%%%%%%%%%%%%%%%%%%%%%%

\documentclass[11pt, a4paper]{amsart}
\usepackage{amsmath}
\usepackage{amssymb}
\usepackage{dsfont}
\usepackage{color}

\setlength{\textwidth}{16cm} \setlength{\textheight}{21.5cm}
\setlength{\oddsidemargin}{0.0cm} \setlength{\evensidemargin}{0.0cm}

\parskip 4pt

\newtheorem{prop}{Proposition}[section]
\newtheorem{teo}{Theorem}[section]
\newtheorem{lema}{Lemma}[section]
\newtheorem{coro}{Corollary}[section]

\theoremstyle{remark}
\newtheorem{rem}{Remark}[section]

\def\ep{\varepsilon}

\def\R{\mathbb R}
\def\N{\mathbb N}

\def\X{{\mathcal X}}
\def\H{{\mathcal H}}
\newcommand{\vvvert}{|\hspace*{-1pt}|\hspace*{-1pt}|}

\begin{document}

\title[Nonlocal diffusion
in domains with holes]{Asymptotic behavior for a nonlocal diffusion
equation in domains with holes}

\author[Cort\'{a}zar, Elgueta, Quir\'{o}s \and Wolanski]{C. Cort\'{a}zar, M. Elgueta, F. Quir\'{o}s \and N. Wolanski}

\address{Carmen Cort\'{a}zar\hfill\break\indent
Departamento  de Matem\'{a}tica, Pontificia Universidad Cat\'{o}lica
de Chile \hfill\break\indent Santiago, Chile.} \email{{\tt
ccortaza@mat.puc.cl} }

\address{Manuel Elgueta\hfill\break\indent
Departamento  de Matem\'{a}tica, Pontificia Universidad Cat\'{o}lica
de Chile \hfill\break\indent Santiago, Chile.} \email{{\tt
melgueta@mat.puc.cl} }

\address{Fernando Quir\'{o}s\hfill\break\indent
Departamento  de Matem\'{a}ticas, Universidad Aut\'{o}noma de Madrid
\hfill\break\indent 28049-Madrid, Spain.} \email{{\tt
fernando.quiros@uam.es} }

\address{Noemi Wolanski \hfill\break\indent
Departamento  de Matem\'{a}tica, FCEyN,  UBA,
\hfill\break \indent and
IMAS, CONICET, \hfill\break\indent Ciudad Universitaria, Pab. I,\hfill\break\indent
(1428) Buenos Aires, Argentina.} \email{{\tt wolanski@dm.uba.ar} }

\thanks{All authors supported by  FONDECYT grants 7090027 and 1110074. The third author supported by
the Spanish Project MTM2008-06326-C02-01. The fourth author supported by the Argentine Council of Research, CONICET under the project PIP625, Res. 845/10 and UBACYT X117. N. Wolanski is a member of CONICET}

\keywords{Nonlocal diffusion, exterior domain, asymptotic behavior, matched asymptotics.}

\subjclass[2010]{%
35R09, %  Integro-partial differential equations
45K05, % Integro-partial differential equations
45M05. % Asymptotics
}

\date{}

\begin{abstract}
The paper deals with the asymptotic behavior of solutions to a
non-local diffusion equation,  $u_t=J*u-u:=Lu$, in an exterior domain,
$\Omega$, which excludes one or several holes, and with zero
Dirichlet data on $\mathbb{R}^N\setminus\Omega$. When the space
dimension is three or more this behavior is given by a multiple of
the fundamental solution of the heat equation away from the holes.
On the other hand, if the solution is scaled according to its decay
factor, close to the holes it behaves like a function that is
$L$-harmonic, $Lu=0$, in the exterior domain and vanishes in its complement.
The height of such a function  at infinity is determined through a  matching
procedure with the multiple of the fundamental solution of the heat equation
representing the outer behavior. The inner and the outer behavior
can be presented in a unified way through a suitable global
approximation.
\end{abstract}

\maketitle

\date{}

\section{Introduction}
\label{Intro} \setcounter{equation}{0}

Let  $\mathcal{H}\subset \mathbb{R}^N$, $N\ge3$,  be a  bounded open set with
smooth boundary  and let $\Omega=\mathbb{R}^N\setminus \mathcal{H}$. We do not
assume $\mathcal{H}=\Omega^c$  to be connected, so that it may represent one
or several holes in an otherwise homogeneous medium. Our goal is to
study the large-time behavior of the solution to the nonlocal heat
equation  in that exterior domain with zero data on
the boundary, namely,
\begin{equation}\label{problem}
\begin{cases}
\displaystyle \partial_t u(x,t)=\int_{\mathbb{R}^N} J(x-y)\big(u(y,t)-u(x,t)\big)\,dy\quad&\mbox{in}\quad \Omega\times(0,\infty),\\
u(x,t)=0&\mbox{in}\quad \mathcal{H}\times(0,\infty),\\
u(x,0)=u_0(x)&\mbox{in}\quad \R^N,
\end{cases}
\end{equation}
with a kernel $J$ that is
assumed to be a nonnegative continuous function with unit integral.

We will restrict ourselves to the case where $J$ is smooth, radially
symmetric, with a compact support contained in the unit ball
centered at the origin and $J(0)>0$. However, results similar to the ones we obtain here should hold
for more general kernels.

The hypotheses on the initial data, $u_0$, are that they are
nonnegative, bounded, integrable, and identically zero in the
hole $\mathcal{H}$. We also assume, without loss of generality, that $0\in\mathcal{H}$.

Evolution problems with this type of diffusion have been widely
considered in the literature, since they can be used to model the
dispersal of a species by taking into account long-range effects,
 \cite{BZ}, \cite{CF}, \cite{Fi}. It has been also proposed as a model for phase transitions, \cite{BCh1}, \cite{BCh2}, and, quite recently, for image enhancement, \cite{GO}.

The nonlocal  equation in \eqref{problem} has been the subject of many recent works
in the case where the spatial domain is $\R^N$, and  also when it is
a smooth bounded domain and it is complemented by a
suitable Dirichlet or Neumann type boundary condition, see the recent monograph~\cite{AndreuMazonRossiToledoBook2010} and the references therein. However, up to our knowledge, this is the first time that an exterior domain is being considered.

When there are no holes, $\mathcal{H}=\emptyset$, mass is conserved, $\int_{\mathbb{R}^N}u(x,t)\,dx=
\int_{\mathbb{R}^N}u_0(x)\,dx$, and the solution to~\eqref{problem} behaves for large times as the solution $v$ to the local heat equation with diffusivity
\begin{equation}
\label{eq:definition.alpha}
\alpha:=\frac1{2N}\int_{\R^N}|z|^2J(z)\,dz
\end{equation}
and the same initial data
\cite{CCR},~\cite{IR1}.
More precisely,
\begin{equation}
\label{eq:asymptotic.CCR}
\lim_{t\to\infty}t^{N/2}\max_{x\in\mathbb{R}^N}|u(x,t)-v(x,t)|=0.
\end{equation}
Hence the asymptotic behavior of $u$ can be described in terms of
the fundamental solution of the heat equation with diffusivity $\alpha$,
$\Gamma_\alpha$. This  solution has a self-similar structure,
$$
\Gamma_{\alpha}(x,t)=t^{-N/2}U_\alpha\left(\frac{x}{t^{1/2}}\right),\qquad
U_\alpha(y)=(4\pi \alpha)^{-N/2}e^{-\frac{|y|^2}{4\alpha}}.
$$
Therefore, in self-similar variables we have convergence towards the
stationary profile  $MU_\alpha$, where
$M=\int_{\mathbb{R}^N}u_0(x)\,dx$, that is,
\begin{equation}
\label{eq:behavior.whole.space}
\lim_{t\to\infty}\max_{y\in\mathbb{R}^N}|t^{N/2}u(yt^{1/2},t)-M U_\alpha(y)|=0.
\end{equation}
Thus, there is an asymptotic symmetrization: no matter wether the
initial datum is radial or not,   the large time behavior of $u$ is
given by a radial profile, which, of course, has the same mass as
the datum.

In the presence of holes the situation is very different. On one
hand, mass is not conserved. On the other hand, the presence of the
hole breaks (in general) the symmetry of the spatial domain, and an
asymptotic symmetrization is no longer possible. Nevertheless, when $N\ge3$ we
still have a conservation law, $\int_{\mathbb{R}^N}
u(x,t)\phi(x)\,dx=\int_{\mathbb{R}^N} u_0(x)\phi(x)\,dx$, with
$\phi$ the unique solution to
\begin{equation}\label{stationary}
\begin{cases}
J*\phi=\phi\quad&\mbox{in}\quad\Omega,\\
\phi=0&\mbox{in} \quad \mathcal{H},\\
\phi(x)\to 1&\mbox{as }|x|\to\infty.
\end{cases}\end{equation}
Moreover, there is a non-trivial asymptotic mass which
coincides with the conserved quantity $M^*=\int_{\mathbb{R}^N}
u_0(x)\phi(x)\,dx$. Besides, if we stand far away from the holes,
they will be seen as a point, and we may still expect some
symmetrization. Indeed, we will prove that
$$
\lim_{t\to\infty}t^{N/2}u(yt^{1/2},t)=M^*U_\alpha(y)\quad\mbox{uniformly in }
|y|\ge \delta>0.
$$
The only effect of the holes in this \emph{outer} limit is the loss
of mass. However, close to the holes, in the \emph{inner} limit,
their effect  is much more important. On compact sets solutions
still decay as $O(t^{-N/2})$. If we scale the solutions accordingly,
we get that the new variable $w(x,t)=t^{N/2}u(x,t)$ satisfies
$$
\partial_t w=J*w-w+\frac N2\frac wt.
$$
Thus, we expect $w$ to converge to a $L$-harmonic function $\Phi$ with zero \lq boundary data',
$$
L\Phi:=J*\Phi-\Phi=0, \quad x\in\Omega,\qquad \Phi=0, \quad x\in \mathcal{H}.
$$
This problem does not have uniqueness. However, a unique solution
can be  determined by prescribing the value at infinity. Indeed, if
$\Phi\to C^*$ as $|x|\to\infty$, then $\Phi=C^*\phi$.

In order to select the right constant $C^*$ characterizing the asymptotic limit of $w$, we need some extra information. This will come from the outer limit, following a typical matched asymptotics procedure.  Let us explain in some detail how this is done.

In view of the asymptotic behavior of $\phi$, we can describe the outer limit in the alternative form
$$
t^{N/2}|u(x,t)-M^*\phi(x)\Gamma_\alpha(x,t)|\to 0\quad\mbox{as }
t\to\infty \mbox{ uniformly in } |x|\ge\delta t^{1/2},\quad \delta>0.
$$
Besides, since $\lim\limits_{t\to\infty}t^{N/2}\Gamma_\alpha(x,t)=(4\pi\alpha)^{-N/2}$ uniformly on compact sets, the expected inner limit can be written as
$$
t^{N/2}|u(x,t)-C^*(4\pi\alpha)^{N/2}\phi(x)\Gamma_\alpha(x,t)|\to 0\quad\mbox{as }
t\to\infty \text{ uniformly in  compact sets}.
$$
We will prove that we can go beyond compact sets, and that this limit holds in a set of the form $|x|\le\delta t^{1/2}$ for some $\delta>0$. Hence, there is an overlapping region between the inner and the outer development, and they can be matched, leading to
$$
C^*=M^*(4\pi\alpha)^{-N/2}.
$$
Notice that we have been able to describe the inner and the outer behavior in a unified way. Hence we may gather the two results in just one theorem.
\begin{teo}
\label{thm:main}
Let $N\ge3$, $0\le u_0\in L^1(\R^N)\cap L^\infty(\R^N)$ and $M^*=\int_{\mathbb{R}^N} u_0(x)\phi(x)\,dx$. Then,
\begin{equation}\label{result}
t^{N/2}|u(x,t)-M^*\phi(x)\Gamma_\alpha(x,t)|\to 0\quad\mbox{as}\quad
t\to\infty \quad\mbox{uniformly in }\R^N.
\end{equation}
\end{teo}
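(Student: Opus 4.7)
\medskip
\noindent\textbf{Proof plan.} The approach is the matched asymptotics scheme laid out in the introduction: establish the outer limit, then the inner one, fix the undetermined constant of the inner limit by matching in the overlap region, and finally merge the two developments into a uniform estimate on all of $\R^N$.

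\emph{Outer limit.} I would perform the parabolic rescaling $u^\lambda(y,s):=\lambda^N u(\lambda y,\lambda^2 s)$ on the shrinking exterior domain $\lambda^{-1}\Omega$, extended by zero on $\lambda^{-1}\mathcal{H}$. The rescaled nonlocal operator converges formally to $\alpha\Delta$. The maximum principle gives $\|u^\lambda(\cdot,s)\|_\infty\le\lambda^N\|u_0\|_\infty$ and $\|u^\lambda(\cdot,s)\|_1\le\|u_0\|_1$. Tightness comes from a second-moment estimate (test the equation against $|x|^2$ and use the compact support of $J$), and equicontinuity in time directly from the equation. By an Arzel\`a--Ascoli argument in the weak sense, any subsequential limit $v$ solves $v_s=\alpha\Delta v$ in $(\R^N\setminus\{0\})\times(0,\infty)$, is nonnegative, and lies in $L^1$. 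The conservation law $\int u(x,t)\phi(x)\,dx=M^*$ combined with $\phi(\lambda y)\to 1$ a.e.\ for $y\ne 0$ forces $v$ to carry total mass $M^*$. Removability of isolated singularities for nonnegative $L^1$ solutions of the heat equation then pins down $v=M^*\Gamma_\alpha$, yielding $t^{N/2}u(yt^{1/2},t)\to M^*U_\alpha(y)$ uniformly for $|y|\ge\delta>0$.

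\emph{Inner limit and matching.} Set $w(x,t):=t^{N/2}u(x,t)$, so $\partial_t w=Lw+\frac{N}{2t}w$. A Gaussian-type upper bound $u(x,t)\le Ct^{-N/2}$, obtained by comparison with the whole-space solution, makes $w$ uniformly bounded. Since $L(C\phi)=0$ in $\Omega$ and $C\phi=0$ on $\mathcal{H}$, the family $\{C\phi\}$ supplies stationary barriers. Combining $L^\infty$-boundedness with the time-equicontinuity extracted from the equation, any sequence of times admits a subsequence along which $w(\cdot,t_k)\to W$ locally uniformly, with $W$ satisfying $LW=0$ in $\Omega$ and $W=0$ on $\mathcal{H}$, hence $W=C^*\phi$ for some $C^*\ge 0$ by the uniqueness statement recalled in the introduction. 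The value of $C^*$ is then fixed by matching with the outer limit: on a sequence $x_n$ with $|x_n|\to\infty$ and $|x_n|/t_n^{1/2}\to 0$, the extended inner limit gives $w(x_n,t_n)\to C^*\phi(x_n)\to C^*$, while the outer limit (applied with $y_n=x_n/t_n^{1/2}$ on any slice $|y_n|\ge\delta$, $\delta\to 0$, using continuity of $U_\alpha$ at the origin) gives $w(x_n,t_n)\to M^*U_\alpha(0)=M^*(4\pi\alpha)^{-N/2}$, so $C^*=M^*(4\pi\alpha)^{-N/2}$, which also rules out dependence on the subsequence.

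\emph{Global estimate and main obstacle.} Write $\psi(x,t):=M^*\phi(x)\Gamma_\alpha(x,t)$. It matches the outer limit for $|x|\ge\delta t^{1/2}$ (since $\phi\to 1$ at infinity and $t^{N/2}\Gamma_\alpha(x,t)=U_\alpha(x/t^{1/2})$) and the inner limit for $|x|\le\delta t^{1/2}$ (since $t^{N/2}\Gamma_\alpha(x,t)\to(4\pi\alpha)^{-N/2}$ uniformly on that set). The far tail $|x|\ge Rt^{1/2}$, $R\gg 1$, is controlled by a crude Gaussian majorant for $u$ and the decay of $U_\alpha$. The central difficulty --- and the main obstacle I foresee --- is to lift the inner convergence from compact sets to the growing ball $\{|x|\le\delta t^{1/2}\}$: the nonlocal operator lacks interior regularization, so one needs \emph{quantitative} control of $w-C^*\phi$ rather than just compactness. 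I would handle it by a comparison argument inside $B_{\delta t^{1/2}}\cap\Omega$, sandwiching $u$ between $\psi\pm\ep\phi(x)t^{-N/2}\mp C(\ep)t^{-N/2-\kappa}$ for some $\kappa>0$, imposing the outer approximation $M^*\phi\Gamma_\alpha$ (already under control on $|x|=\delta t^{1/2}$) as boundary data, and letting $\ep\to 0$ after $t\to\infty$. Once the inner and outer estimates jointly cover $\R^N$, they assemble into the claimed uniform bound.
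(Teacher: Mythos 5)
Your overall architecture --- outer limit, inner limit, and a matching implemented as a comparison argument in the moving region $\{|x|^2\le\delta t\}$ with the outer behaviour supplying the boundary data --- is exactly the paper's. For the outer limit, though, you take a genuinely different route: a rescaling/compactness/identification argument in the style of Kamin and V\'azquez, whereas the paper works directly from the representation formula \eqref{eq:representation.formula} with $F=e^{-t}\delta+\omega$, using $\|\omega(\cdot,t)-\Gamma_\alpha(\cdot,t)\|_{L^\infty}=o(t^{-N/2})$, the barrier bound \eqref{barriers}, and $M(t)\to M^*$. Your route is viable but not free: since $L$ is not scale invariant and $u$ has no spatial regularity beyond that of $u_0$, the compactness must be taken in a weak topology and the limit identified by moving $L_\lambda$ onto test functions; you then need a removable-singularity theorem for the line $\{0\}\times(0,\infty)$ and an identification of the initial trace $v(\cdot,s)\rightharpoonup M^*\delta_0$ for nonnegative $L^1$ solutions of the local heat equation, and the second-moment tightness argument requires truncating $u_0$ since $\int u_0|x|^2$ is not assumed finite. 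The introduction of the paper explicitly cites the lack of scale invariance as the reason for avoiding this path; what your approach buys is independence from the fine estimates on $\omega$, at the price of importing local-heat-equation potential theory.

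The genuine gap is in the step you yourself single out as the crux. The barriers $M^*\phi\Gamma_\alpha\pm\ep\phi(x) t^{-N/2}\mp C(\ep)t^{-N/2-\kappa}$ do not satisfy the required differential inequalities. First, with the signs as written the upper barrier is negative on $\mathcal{H}$ (where $u=0$) and the lower one is positive there, so the ordering on the hole fails. More seriously, a correction term constant in $x$ is annihilated by $L$, so its only contribution to $\partial_t-L$ is its time derivative, of size $t^{-N/2-\kappa-1}$; this cannot absorb the commutator error $\int J(x-y)\big(\phi(y)-\phi(x)\big)\big(\Gamma_\alpha(y,t)-\Gamma_\alpha(x,t)\big)\,dy$, which is of order $t^{-(N+1)/2}|x|^{1-N}$ and hence of order $t^{-N/2-1/2}\gg t^{-N/2-\kappa-1}$ near $\partial\mathcal{H}$. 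The paper's fix is to take the correction $K_\pm t^{-(N+\kappa)/2}z(x)$ with $z(x)=(|x|^2+b)^{-\gamma}$, $0<\kappa<\min\{1,N-2\}$, $0<2\gamma<N-2-\kappa$: by Lemma~\ref{lemma:L.z}, $Lz\le -c\,(|x|^2+b)^{-(\gamma+1)}$, so the decisive positive contribution comes from the spatial strictness of $z$ as an $L$-supersolution rather than from $\partial_t$, and it dominates both the commutator error and the term $\frac{N+\kappa}{2t}z$ on $R^2\le|x|^2\le\delta t$ for $\delta$ small. (The paper also builds the barrier on $\phi\omega$ rather than $\phi\Gamma_\alpha$, so that $(\partial_t-L)(\phi\omega)$ reduces exactly to $\phi e^{-t}J$ minus the commutator; that is a convenience, not the essential point.) A smaller caveat: your intermediate claim that any locally uniform subsequential limit $W$ of $t^{N/2}u$ must equal $C^*\phi$ invokes a Liouville-type statement for bounded $L$-harmonic functions vanishing on $\mathcal{H}$ that the paper does not prove --- its uniqueness result presupposes a prescribed limit at infinity --- but this step is superseded by the comparison argument in any case.
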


\begin{rem} The set  $\Omega+B(0;1/2)$ has only one unbounded
component $\mathcal{U}$. This component $\mathcal{U}$ may contain several components of $\Omega$. Though only one of them is really unbounded, due to the non-local character of the diffusion operator, all of them are \lq connected to infinity'. However, there may exist components of $\Omega$ which do not intersect $\mathcal{U}$, and which are, thus, \lq truly' bounded. The function $\phi$ is identically 0 there. Therefore,  the scaling we are using is not adequate to characterize the asymptotic behavior
in such subdomains. Indeed, the decay rate in these kind of bounded
\lq components' is exponential, and the asymptotic profile is an eigenfunction of the operator $L$  with zero Dirichlet boundary conditions in the complement of the \lq component', associated to the first eigenvalue, \cite{CCR}.
\end{rem}

\begin{rem}The asymptotic behavior for the case where there are no holes~\eqref{eq:behavior.whole.space} can also be described in the form~\eqref{result}, since the solution to~\eqref{stationary} when $\mathcal{H}=\emptyset$ is  $\phi=1$.
\end{rem}

When $N\le 2$, mass is expected to decay to zero (logarithmically
when $N=2$,  and like a power when $N=1$), and the  solutions will
decay faster than $O(t^{-N/2})$. The analysis becomes much more
involved, and is postponed to a future paper.

An analogous study in an exterior domain has been performed for the corresponding local
problem  in~\cite{Herraiz}. However, the author makes extensive use
in the proofs of the explicit form of the fundamental solution of
the local heat equation in the whole space. One of the main difficulties in the
non-local case is that we do not have such an explicit expression.

An alternative approach, in which fundamental solutions do not play
such a key role in the proofs, has been used for nonlinear (local
problems), see for instance \cite{BQV}, \cite{RV}. However, the operators under
consideration in these cases are invariant under some scaling
transformations, which is not the case in the present problem.

Nevertheless, although it will not be apparent from our proofs,
scaling still plays a key role in  the {\em outer} region. In fact, as observed in~\cite{CERW2},
under the usual parabolic scaling our operator `converges' to
$\alpha\Delta$. This is the reason why the outer behavior is given by
the fundamental solution to the heat equation with diffusivity $\alpha$.
This idea has been used  in
\cite{TW} to study the asymptotic behavior for  the
nonlocal heat equation with absorption in a certain critical case.

We complete the study of the asymptotic behavior by analyzing the function $\phi$ at infinity. We prove that all the derivatives of $\phi$ behave at infinity as those of the fundamental solution of the Laplace operator.
This result might be of independent interest.

\noindent{\sc Notations. } In what follows we will denote  $Lu(x,t):=\int_{\mathbb{R}^N} J(x-y)\big(u(y,t)-u(x,t)\big)\,dy$.

%%%%%%%%%%%%%%%%%%%%%%%%%%%%%%%%%%%%%%%%%%%%%%%%%%%%%%%%%5
\section{Preliminaries}\label{Preliminaries}
\setcounter{equation}{0}

\noindent\textsc{On the notion of solution. }
Let $u_0\in L^1(\R^N)$, $u_0=0$ a.e. in $\mathcal{H}$. A solution of~\eqref{problem} is a function $u\in C([0,\infty):L^1(\R^N))$ such that for all $t>0$
    \begin{equation}\label{eq:form.int}
        u(\cdot,t)=u_0+\int_0^t \big(J\ast u(\cdot,s)-u(\cdot,s)\big)\,ds \ \text{ a.e. in }\Omega, \qquad u(\cdot,t)=0\ \text{ a.e. in }\mathcal{H}.
    \end{equation}
Subsolutions and supersolutions are defined as usual by replacing
the equalities in the definition above respectively by $\leq$ or
$\geq$.
\begin{rem}
If $u$ is a solution, then
$u\in L^1(\mathbb{R}^N\times[0,T])$ for all $T>0$.
Hence,~\eqref{problem} holds, not only  in the sense of
distributions, but also~a.e.~in $\Omega\times(0,\infty)$.  Moreover, we also have $u\in C^1([0,\infty): L^1(\R^N))$, and the equation
holds~a.e.~in $\Omega$ for all $t\ge0$.
\end{rem}

\

\noindent\textsc{Existence and uniqueness. } Existence and uniqueness of a solution to \eqref{problem} can be proved  for any $u_0\in L^1(\R^N)$ such that $u_0=0$ a.e.~in $\mathcal{H}$ by using a fixed point argument. We do not need to assume neither $u_0\in L^\infty(\mathbb{R}^N)$, nor a sign restriction.

\begin{teo}
    For any $u_0\in L^1(\R^N)$ such that $u_0=0$ a.e.~in $\mathcal{H}$ there exists a unique solution of~\eqref{problem}.
\end{teo}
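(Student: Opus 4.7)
The plan is to recast \eqref{problem} as a linear Cauchy problem in the Banach space $L^1(\R^N)$ and apply the Banach contraction principle. Define the bounded linear operator $F\colon L^1(\R^N)\to L^1(\R^N)$ by $Fu:=\chi_\Omega(J\ast u-u)$. By Young's inequality $\|J\ast u\|_1\le\|J\|_1\|u\|_1=\|u\|_1$, so $\|Fu\|_1\le 2\|u\|_1$, and by construction $Fu=0$ a.e.~on $\mathcal{H}$.

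First I would show that, given $u_0\in L^1(\R^N)$ with $u_0=0$ a.e.~on $\mathcal{H}$, a function $u\in C([0,T];L^1(\R^N))$ satisfies \eqref{eq:form.int} if and only if it solves the single integral equation
\begin{equation}\label{eq:single}
u(\cdot,t)=u_0+\int_0^t Fu(\cdot,s)\,ds\quad\text{a.e. in }\R^N.
\end{equation}
Any solution of \eqref{eq:form.int} vanishes on $\mathcal{H}$ by its second line, so \eqref{eq:form.int} on $\Omega$ agrees with \eqref{eq:single} there (where $\chi_\Omega\equiv 1$), while both sides of \eqref{eq:single} vanish on $\mathcal{H}$. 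Conversely, since $u_0=0$ and $Fu=0$ on $\mathcal{H}$, any solution of \eqref{eq:single} vanishes a.e.~on $\mathcal{H}$, recovering \eqref{eq:form.int}.

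Next, fix $T>0$ and consider $\Phi\colon C([0,T];L^1(\R^N))\to C([0,T];L^1(\R^N))$ given by $(\Phi u)(t):=u_0+\int_0^t Fu(\cdot,s)\,ds$; continuity in $t$ and membership in $L^1$ follow from boundedness of $F$. For $u,v$ in this space,
\[
\|(\Phi u-\Phi v)(t)\|_1\le\int_0^t\|F(u-v)(s)\|_1\,ds\le 2T\,\|u-v\|_{C([0,T];L^1)}.
\]
Choosing $T=T_0<1/2$, the map $\Phi$ is a strict contraction on a complete metric space and admits a unique fixed point, which by the equivalence above yields a unique solution of \eqref{problem} on $[0,T_0]$.

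Finally, I would extend to $[0,\infty)$ by iteration: the solution at $t=T_0$ lies in $L^1(\R^N)$ and vanishes a.e.~on $\mathcal{H}$, so the same argument produces a unique extension to $[T_0,2T_0]$, and so on. Crucially, the step $T_0$ is independent of the initial data because it is dictated only by the operator norm of $F$, so the iteration covers $[0,\infty)$ and uniqueness on each step gives global uniqueness. There is no real obstacle here; the only subtle point is the bookkeeping in verifying the equivalence between the piecewise formulation \eqref{eq:form.int} (which treats $\Omega$ and $\mathcal{H}$ separately) and the single $L^1$-valued equation \eqref{eq:single}, which is what allows the fixed-point argument to run uniformly on all of $\R^N$.
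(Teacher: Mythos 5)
Your proposal is correct and is essentially the paper's own argument: the operator $\Phi$ with $Fu=\chi_\Omega(J\ast u-u)$ coincides with the paper's piecewise-defined map $\mathcal T$ once $u_0=0$ on $\mathcal H$, the contraction constant $2t_0<1$ is the same, and the data-independent time step allows the same iteration to all of $[0,\infty)$. The only cosmetic difference is that you work on all of $C([0,T];L^1(\R^N))$ and check a posteriori that the fixed point vanishes on $\mathcal H$, whereas the paper restricts from the start to the closed subspace of functions vanishing there.
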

\begin{proof}
The proof is quite similar to the one for the case $\mathcal{H}=\emptyset$ performed in~\cite{CCR}.

Let $\mathcal{B}_{t_0}=\{u\in C([0,t_0]:L^1(\mathbb{R}^N)):  u(\cdot, t)=0 \text{ a.e. in }\mathcal{H} \text{ for all }t\in[0,t_0]\}$. Endowed with the norm
    $$
      \vvvert u\vvvert=\max_{0\leq t\leq t_0}\|u(\cdot,t)\|_{ L^1(\mathbb{R}^N)},
    $$
it is a Banach space.
We define the operator
$\mathcal{T}:\mathcal{B}_{t_0}\to\mathcal{B}_{t_0}$ through
\begin{equation}
\label{eq:definition.operator}
  (\mathcal{T} u)(\cdot,t)=\begin{cases}
  \displaystyle u_0(\cdot)+\int_0^t\left( J*u(\cdot,s)-u(\cdot,s)\right)\,ds\quad&\text{a.e. in }\Omega,\\[10pt]
  0\quad&\text{a.e. in }\mathcal{H}.
  \end{cases}
\end{equation}
This operator turns out to be contractive if $t_0$ is small enough. Indeed,
$$
\begin{array}{l}
 \displaystyle\int_{\mathbb{R}^N}
 |\mathcal{T}\varphi-\mathcal{T}\psi|(\cdot,t)
 \\[10pt]
\displaystyle \qquad
 \leq\int_{\Omega}\int_0^t \Big( |J*(\varphi-\psi)(\cdot,s)|
  +
  |\varphi-\psi|(\cdot,s)\Big)\,ds \\[10pt]
\displaystyle  \qquad \leq
\int_{0}^t\Big(\|J\|_{L^1(\mathbb{R}^N)}+1\Big)\|(\varphi-\psi)(\cdot,s)\|_{L^1({\mathbb{R}^N)}}\,ds.
\end{array}
$$
Hence, $\vvvert\mathcal{T}\varphi-\mathcal{T}\psi\vvvert\leq 2t_0\vvvert \varphi-\psi\vvvert$.
Thus, $\mathcal{T}$ is a contraction if $t_0<1/2$.
Existence and uniqueness in the time interval $[0,t_0]$ now follow easily, using Banach's fixed point theorem. Since the length $t_0$ of the  existence and uniqueness time interval does not depend on the initial data, we may iterate the argument to extend the result to all positive times.
\end{proof}

\

\noindent\textsc{Comparison. } Comparison is an immediate
consequence of the following  $T$-contraction property in $L^1$.
Again, we are only assuming that the initial data are integrable.

\begin{teo}\label{thm:contraction}
  Let $u_1$ and $u_2$ be two solutions of~\eqref{problem}
  with initial data $u_{1,0},u_{2,0}\in L^1(\R^N)$. Then, for every $t\ge0$,
  $$
  \int_{\mathbb{R}^N} (u_1-u_2)_+(\cdot,t)\leq   \int_{\mathbb{R}^N} (u_{1,0}-u_{2,0})_+.
  $$
\end{teo}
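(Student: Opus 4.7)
The plan is to reduce the statement to a single linear equation and adapt the standard nonlocal $L^1$ contraction argument to the exterior domain. Set $w=u_1-u_2$, which by linearity satisfies $w_t=Lw$ a.e.\ in $\Omega\times(0,\infty)$ and $w=0$ a.e.\ in $\H$, with initial datum $w_0=u_{1,0}-u_{2,0}\in L^1(\R^N)$. The target estimate $\int_{\R^N}(w(t))_+\le\int_{\R^N}(w_0)_+$ would follow from a nonlocal Kato-type inequality, combined with the symmetry of $J$ and the zero Dirichlet condition outside $\Omega$.

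I would approximate the positive part by a family of $C^1$ convex nondecreasing functions $p_\ep:\R\to[0,\infty)$ with $p_\ep(0)=0$, $0\le p_\ep'\le 1$, and $p_\ep(s)\to s_+$ pointwise, for instance $p_\ep(s)=0$ for $s\le 0$, $p_\ep(s)=s^2/(2\ep)$ for $0\le s\le\ep$, and $p_\ep(s)=s-\ep/2$ for $s\ge\ep$. From the integral representation \eqref{eq:form.int}, for a.e.\ $x\in\Omega$ the map $t\mapsto w(x,t)$ is absolutely continuous with derivative $Lw(x,t)$ a.e., so composing with $p_\ep$ and using Fubini yields
\[
\int_{\R^N}p_\ep(w(\cdot,t))-\int_{\R^N}p_\ep(w_0)=\int_0^t\int_\Omega p_\ep'(w)\,Lw\,dx\,ds,
\]
the spatial integral restricting to $\Omega$ because $p_\ep(w)\equiv 0$ in $\H$.

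The main step is the nonlocal Kato inequality: the convexity bound $p_\ep'(a)(b-a)\le p_\ep(b)-p_\ep(a)$ applied with $a=w(x)$, $b=w(y)$ and multiplied by the nonnegative weight $J(x-y)$, after integration in $y$, gives $p_\ep'(w(x))\,Lw(x)\le L(p_\ep(w))(x)$. Integrating over $\Omega$ and exploiting $p_\ep(w)=0$ in $\H$ together with Fubini and the symmetry of $J$, the nonlocal contribution becomes
\[
\int_\Omega\int_\Omega J(x-y)\,p_\ep(w(y))\,dy\,dx=\int_\Omega p_\ep(w(y))\Big(\int_\Omega J(x-y)\,dx\Big)\,dy\le\int_\Omega p_\ep(w),
\]
so that $\int_\Omega L(p_\ep(w))\le 0$. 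Combining the two displays gives $\int p_\ep(w(t))\le\int p_\ep(w_0)$ for every $t\ge 0$.

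Passing to the limit $\ep\to 0^+$, since $0\le p_\ep(s)\le s_+\le|s|$ and $w(t),w_0\in L^1(\R^N)$, dominated convergence yields the announced $L^1$ contraction of the positive part. The proof is essentially routine; the only mild technical obstacle is the justification of the pointwise-in-$x$ absolute continuity of $w(x,\cdot)$, which however follows directly from the integrated form \eqref{eq:form.int}. The conceptual point worth emphasizing is that the zero Dirichlet condition in $\H$, far from creating trouble, is precisely what makes the \lq boundary term' $-\int_\Omega\int_\H J(x-y)\,p_\ep(w(x))\,dy\,dx$ have the correct sign, so no new error terms appear relative to the whole-space contraction argument.
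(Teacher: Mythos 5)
Your proof is correct and follows essentially the same route as the paper's: both hinge on the Kato-type inequality for the positive part combined with $\int_\Omega J(x-y)\,dx\le 1$, the only difference being that you regularize $s\mapsto s_+$ by the convex functions $p_\ep$ and pass to the limit, whereas the paper multiplies directly by the indicator $\mathds{1}_{\{u_1>u_2\}}$ and invokes the $C^1([0,\infty);L^1)$ regularity to justify $\partial_t(u_1-u_2)\mathds{1}_{\{u_1>u_2\}}=\partial_t(u_1-u_2)_+$. Your version is a slightly more careful rendering of the identical argument, so nothing further is needed.
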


\begin{proof} We subtract  the equations for $u_{1}$ and $u_{2}$
    and multiply by $\mathds{1}_{\{u_{1}>u_{2}\}}$.
    Since $u_{1}-u_{2}\in C^1([0,\infty); L^1(\mathbb{R}^N))$,
    $$
    \partial_t (u_{1}-u_{2})\mathds{1}_{\{u_{1}>u_{2}\}}=\partial_t (u_{1}-u_{2})_+.
    $$
    On the other hand, just using that  $0\leq\mathds{1}_{\{u_{1}>u_{2}\}}\leq 1$ and $J\ge0$ we get,
    $$ J\ast(u_{1}-u_{2})\mathds{1}_{\{u_{1}>u_{2}\}}
    \leq J\ast(u_{1}-u_{2})_+.
    $$
    Finally, $(u_{1}-u_{2})\mathds{1}_{\{u_{1}>u_{2}\}}=(u_{1}-u_{2})_+$.
    We end up with
    $$
    \partial_t (u_1-u_2)_+\leq
    \begin{cases}
    J\ast(u_{1}-u_2)_+-
    (u_{1}-u_2)_+\qquad&\text{a.e. in }\Omega,\\[10pt]
    0\qquad&\text{a.e. in }\mathcal{H}.
    \end{cases}
    $$
    Integrating in space, and using Fubini's Theorem, we get
    $$
    \partial_t \int_{\mathbb{R}^N}(u_{1}-u_{2})_+(\cdot,t)\le0.
    $$
\end{proof}

\begin{rem}
The same proof applies when there are no holes, $\Omega=\mathbb{R}^N$.
\end{rem}

As a corollary we have, on one hand comparison and, on the other hand an $L^1$-contraction property for solutions,
$$
\|(u_1-u_2)(\cdot,t)\|_{L^1(\mathbb{R}^N)}\le
\|u_{1,0}-u_{2,0}\|_{L^1(\mathbb{R}^N)}.
$$
The latter property implies the continuous dependence of solutions on the initial data, and  a uniform control of the
$L^1$-norm along time,
$$
\|u(\cdot,t)\|_{L^1(\R^N)}\le \|u_0\|_{L^1(\R^N)}.
$$

The proof of Theorem~\ref{thm:contraction} applies under much weaker hypotheses, leading to a more general comparison principle that will be useful in the sequel.

\begin{teo}\label{thm:contraction.general}
Let $\{\Omega(t)\}_{t\ge0}$ be a continuous (with respect to the Hausdorff distance)  family of open smooth domains of $\mathbb{R}^N$. Let $u_1,u_2\in C^1([0,\infty);L^1(\mathbb{R}^N))$ be such that
$$
\partial_t u_1-Lu_1\le 0,\ \partial_t u_2-L u_2\ge0\quad\text{for a.e. }x\in\Omega(t),\ \forall\, t>0,
$$
and $u_1(x,t)\le u_2(x,t)$ for $a.e.\ x\in\mathbb{R}^N\setminus \Omega(t)$, $\forall \,t>0$.
 Then, for every $t\ge0$,
  $$
  \int_{\mathbb{R}^N} (u_1-u_2)_+(\cdot,t)\leq   \int_{\mathbb{R}^N} (u_{1}(x,0)-u_{2}(x,0))_+.
  $$
\end{teo}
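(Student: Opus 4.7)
The plan is to follow the proof of Theorem~2.2 essentially verbatim, with the appropriate bookkeeping to accommodate both the moving domain $\Omega(t)$ and the weakened hypothesis (differential inequalities only inside $\Omega(t)$, pointwise ordering on its complement).

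First, set $v:=(u_1-u_2)_+$. Since $u_1-u_2\in C^1([0,\infty);L^1(\mathbb{R}^N))$, the same chain-rule argument used in Theorem~2.2 yields
\begin{equation*}
\partial_t v=\partial_t(u_1-u_2)\,\mathds{1}_{\{u_1>u_2\}}\quad\text{a.e.}
\end{equation*}
Combining this identity with the sub/supersolution inequalities on $\Omega(t)$, using $0\le\mathds{1}_{\{u_1>u_2\}}\le 1$, $J\ge 0$, and $(u_1-u_2)\mathds{1}_{\{u_1>u_2\}}=v$, one gets
\begin{equation*}
\partial_t v \le J\ast v - v\quad\text{a.e. in }\Omega(t),\ t>0.
\end{equation*}

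The new ingredient, which replaces the role played by the boundary condition in Theorem~2.2, is the behavior on the complement. Because $u_1\le u_2$ a.e.\ on $\mathbb{R}^N\setminus\Omega(t)$ at every $t$, we have $\mathds{1}_{\{u_1>u_2\}}=0$ a.e.\ there, so $v(\cdot,t)\equiv 0$ and hence also $\partial_t v(\cdot,t)=0$ a.e.\ on the complement at each time. Since $v$ is globally $C^1$ into $L^1(\mathbb{R}^N)$, differentiation under the integral is legitimate, and the only contribution comes from $\Omega(t)$:
\begin{equation*}
\frac{d}{dt}\int_{\mathbb{R}^N}v(x,t)\,dx=\int_{\Omega(t)}\partial_t v\,dx \le \int_{\Omega(t)}\bigl(J\ast v-v\bigr)dx \le \int_{\mathbb{R}^N}\bigl(J\ast v-v\bigr)dx=0,
\end{equation*}
where the penultimate step uses $v\ge 0$ together with $v\equiv 0$ off $\Omega(t)$, and the last step is Fubini combined with $\int J=1$. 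Integration in $t$ then closes the argument.

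The only mildly delicate point is that $\Omega(t)$ is a moving set, which might suggest a boundary contribution when interchanging $\frac{d}{dt}$ with the space integral. But since $v$ is $C^1$ in the $L^1$ sense \emph{globally} on $\mathbb{R}^N$ and vanishes a.e.\ off $\Omega(t)$ at every time, no such term appears; the Hausdorff continuity of the family $\{\Omega(t)\}$ is needed only to guarantee the measurability of the space--time set $\{(x,t):x\in\Omega(t)\}$ and make the split of the integral well-posed.
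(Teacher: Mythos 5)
Your proposal is correct and is essentially the paper's own argument: the paper gives no separate proof of this theorem, stating only that the proof of Theorem~\ref{thm:contraction} ``applies under much weaker hypotheses,'' and your write-up is exactly that adaptation, with the ordering $u_1\le u_2$ on $\mathbb{R}^N\setminus\Omega(t)$ forcing $\mathds{1}_{\{u_1>u_2\}}=0$ there so that only the region $\Omega(t)$ contributes and the Fubini step $\int (J\ast v - v)\le 0$ closes as before. The one phrase worth tightening is the inference ``$v\equiv 0$ off $\Omega(t)$ hence $\partial_t v=0$ there,'' which for a moving set is best justified directly from your identity $\partial_t v=\partial_t(u_1-u_2)\mathds{1}_{\{u_1>u_2\}}$ together with the vanishing of the indicator on the complement.
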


\begin{rem}
\label{rem:comparison.general} In order to compare a subsolution and a supersolution in $\Omega(t)$, $t>0$, we only need them to be ordered in $\left(\Omega(t)+B(0;1)\right)\setminus\Omega(t)$, $t>0$, and in $\Omega(0)+B(0;1)$.
\end{rem}

\noindent\textsc{Time decay. } Solutions to
problem~\eqref{problem} satisfy
\begin{equation}\label{eq-exponencial}
u(\cdot,t)=\begin{cases}
\displaystyle e^{-t}u_0+\int_0^te^{-(t-s)}J*u(\cdot,s)\,ds\quad&\mbox{a.e. in }\R^N\setminus\H,\\[10pt]
0\quad&\mbox{a.e. in }\H.
\end{cases}
\end{equation}
Therefore, $u$ has the same spatial regularity as the initial datum
$u_0$, but not more.  In particular, if $u_0$ is not bounded, neither is $u(\cdot,t)$ for any later time.

On the contrary, if the initial datum is bounded, the solution stays bounded as time goes by. Moreover, if the datum belongs to $L^1(\R^N)\cap L^\infty(\R^N)$, the solution decays like $O(t^{-N/2})$. To check this, we first observe that, since $u$ is a solution to \eqref{problem} and $u(\cdot,t)=0$ a.e. in $
\mathcal{H}$ for all  $t\in [0,\infty)$, then
\begin{equation}\label{problem1}
u_t=Lu-\X_\mathcal{H} (J*u)\quad\text{in }\R^N\times(0,\infty).
\end{equation}
Besides, if $u_0\ge0$, we have $u(x,t)\ge0$. Hence, $u$, which belongs to $C([0,\infty);L^1(\mathbb{R}^N)$), is a subsolution to the Cauchy
problem (no holes). Therefore,  it lies below the solution
$u_{\text{C}}$ to the Cauchy problem with the same initial datum.

On the other hand, as we have already mentioned (see
\eqref{eq:behavior.whole.space}),  if $u_0\in L^1(\mathbb{R}^N)\cap
L^\infty(\mathbb{R}^N)$, then $u_{\text{C}}=O(t^{-N/2})$. Thus,
\begin{equation}
\label{decaimiento}
0\le u(x,t)\le C
t^{-N/2}.
\end{equation}

\begin{rem} In order to obtain the asymptotic behavior of
$u_{\text{C}}$, the paper \cite{CCR} requires $u_0,\hat
u_0 \in L^1(\mathbb{R}^N)$. The paper \cite{IR1} dispenses with the integrability assumption on $\hat u_0$, and on the sole hypothesis $u_0\in L^1(\mathbb{R}^N)$ proves that
$$
\lim_{t\to\infty}t^{N/2}\max_{x\in\mathbb{R}^N}|u(x,t)-e^{-t}u_0(x)-v(x,t)|=0,
$$
where $v$ has the same meaning as in~\eqref{eq:asymptotic.CCR}. Hence, the hypotheses $u_0\in L^1(\mathbb{R}^N)\cap L^\infty(\mathbb{R}^N)$ are enough to have~\eqref{eq:behavior.whole.space}.
\end{rem}

\

\noindent\textsc{$L^\infty$-solutions. } Instead of the $L^1$-theory
outlined above, an $L^\infty$-theory for~\eqref{problem} is also possible. Given an initial datum $u_0\in L^\infty(\mathbb{R}^N)$, an $L^\infty$-solution to problem~\eqref{problem} is a function  $u\in
C([0,\infty);L^\infty(\mathbb{R}^N))$ such that~\eqref{eq:form.int} holds for all $t\ge0$.

As in the case of $L^1$-solutions, existence and uniqueness follow
from a fixed-point argument. Indeed,
$\mathcal{B}_{t_0}=\{u\in L^\infty(\mathbb{R}^N\times[0,t_0]):  u(\cdot, t)=0 \text{ a.e. in }\mathcal{H} \text{ for all }t\in[0,t_0]\}$ endowed with the
norm
$$
 \vvvert u\vvvert=\max_{0\leq t\leq t_0}\|u(\cdot,t)\|_{ L^\infty(\mathbb{R}^N)},
$$
is a Banach space, and the operator
$\mathcal{T}:\mathcal{B}_{t_0}\to\mathcal{B}_{t_0}$ defined by~\eqref{eq:definition.operator}
satisfies
$\vvvert\mathcal{T}\varphi-\mathcal{T}\psi\vvvert\leq 2t_0\vvvert \varphi-\psi\vvvert$.
Hence, it is contractive if $t_0<1/2$, which implies the local
existence and uniqueness result.
By iteration, taking as initial datum
$u(\cdot,t_0)\in L^\infty(\mathbb{R}^N)$, we obtain existence  and
uniqueness for $[0,2t_0]$ and therefore for all times.

Besides, $L^\infty$-solutions depend continuously on the initial data. Indeed, let $u_1$ and $u_2$ be  $L^\infty$-solutions with initial data $u_{1,0}$, $u_{2,0}\in L^\infty (\mathbb{R}^N)$.
Since $u_i$ satisfies \eqref{eq-exponencial} (with $u_0=u_{i,0}$), an easy computation shows that for every $t>0$,
$$
\max_{s\in[0,t]}\|u_1(\cdot,s)-u_2(\cdot,s)\|_{L^\infty(\mathbb{R}^N)}\leq  \|u_{1,0}-u_{2,0}\|_{L^\infty(\mathbb{R}^N)}.
$$
In particular, the maximum principle for bounded solutions applies, namely
\[
\|u(\cdot,t)\|_{L^\infty(\R^N)}\le \|u(\cdot,0)\|_{L^\infty(\R^N)}\quad\forall\, t>0.
\]

Essentially the same computation yields  a comparison result for bounded sub- and supersolutions if the hole does not change with time. Let us see this in some detail. Assume $u_{1,0}\le u_{2,0}$ in $\R^N$. Let $u_1$ be a bounded subsolution with initial datum $u_{1,0}$ and $u_2$ a bounded supersolution with initial datum $u_{2,0}$ in $\Omega=\R^N\setminus\H$ with $u_1\le u_2$ in $\H$ for every $t>0$. Let $u=u_1-u_2$ and $u_0=u_{1,0}-u_{2,0}$. Then,
\[
u(x,t)\le
\begin{cases}
\displaystyle
e^{-t}u_0(x)+\int_0^t e^{-(t-s)} J*u(x,s)\,ds\quad&\mbox{if } x\in \Omega,\\
0\quad&\mbox{if }x\in\H.
\end{cases}
\]
Hence, for $x\in\Omega$,
\[
u(x,t)\le \int_0^t e^{-(t-s)} J*u(x,s)\,ds\le \sup_{\R^N\times(0,t)}\hskip-8pt u\ (1-e^{-t}),
\]
with leads to
\[
\sup_{\R^N\times(0,t)}\hskip-8pt u\le \sup_{\R^N\times(0,t)}\hskip-8pt u \ (1-e^{-t}).
\]
Therefore, $\sup_{\R^N\times(0,t)} u\le 0$ for every $t>0$. This means that $u\le0$. Thus, $u_1\le u_2$.

\begin{rem} The comparison result for bounded sub- and supersolutions of the Cauchy problem was proved in~\cite{LW} using a different technique. The result of that paper is in fact more general, since it also applies to
semilinear equations, as long as the nonlinearity is locally Lipschitz continuous. In addition, its proof copes without further ado with the case of holes that change with time.
\end{rem}

\begin{rem}
From now on we will always assume that $0\le u_0\in L^1(\R^N)\cap L^\infty(\R^N)$.
\end{rem}

\

\noindent\textsc{Representation formula. }
By \eqref{problem1}, $u$ can be expressed in terms of the fundamental solution $F=F(x,t)$ to the operator $\partial_t-L$ in the whole space by means of the variations of constants formula. Thus,  for $t\ge t_0$ we have
\begin{equation}
\label{eq:variation.of.constants}
u(x,t)=\int_{\mathbb{R}^N}F(x-y,t-t_0)u(y,t_0)\,dy
-\int_{t_0}^t\int_{\mathbb{R}^N}F(x-y,t-s)\X_\mathcal{H}(y)(J*u(\cdot,s))(y)\,dy\,ds.
\end{equation}
The fundamental solution can be decomposed as
\begin{equation}\label{fund-sol}
F(x,t)=e^{-t}\delta(x)+\omega(x,t),
\end{equation}
where $\delta(x)$ is the Dirac mass at the origin in $\R^N$ and $\omega$ is defined via its Fourier transform as
\begin{equation}
\label{eq:transform.omega}
\hat\omega(\xi,t)=e^{-t}\big(e^{\hat J(\xi)t}-1\big),
\end{equation}
see~\cite{CCR}, from where it is easy to see that $\omega$ is a smooth function and
\[
\int_{\mathbb{R}^N}\omega(x,t)\,dx=\hat\omega(0,t)=1-e^{-t}.
\]
Combining~\eqref{eq:variation.of.constants} and~\eqref{fund-sol}, we obtain
\begin{equation}
\label{eq:representation.formula}
\begin{aligned}
u(x,t)&=e^{-(t-t_0)}u(x,t_0)+\int_{\mathbb{R}^N} \omega(x-y,t-t_0)u(y,t_0)\,dy\\
&-\int_{t_0}^te^{-(t-s)}\X_\mathcal{H}(x)(J*u(\cdot,s))(x)\,ds\\
&-\int_{t_0}^t\int_{\mathbb{R}^N} \omega(x-y,t-s)\X_\mathcal{H}(y)(J*u(\cdot,s))(y)\,dy\,ds.
\end{aligned}
\end{equation}
This formula will be the starting point to obtain both the behavior at infinity of $\phi$ and the outer large time behavior  of $u$.

\

\noindent\textsc{Estimates for $\omega$. } In order to take profit of the representation formula \eqref{eq:representation.formula}, we need good estimates for the regular part, $\omega$, of the fundamental solution.

A first source for estimates comes from the asymptotic convergence
of $\omega$ to the fundamental solution of the local heat equation
with diffusivity $\alpha$, which is proved using Fourier transform
techniques  \cite{IR1}. Indeed,  given a multi-index
$\beta=(\beta_1,\dots,\beta_N)\in\mathbb{N}_0^N$ of order
$|\beta|=s$,
\begin{equation}\label{estima-W}
t^{(N+s)/2}\|D^\beta
\omega(\cdot,t)-D^\beta\Gamma_\alpha(\cdot,t)\|_{L^\infty(\R^N)}\to0\quad\mbox{as
}t\to\infty,
\end{equation}
where $D^\beta
f=\partial^{\beta_1}_{x_1}\cdots\partial^{\beta_N}_{x_N}f$. In
particular,
\begin{equation}\label{decaimiento-W}
|D^\beta \omega(x,t)|\le Ct^{-\frac{N+s}2}\quad\text{if}\quad|\beta|=s.
\end{equation}
These estimates give the right order of time decay. However, they do not take into account the spatial structure of $\omega$, and will not be enough for our purposes. We need to know something about the spatial decay of $\omega$ as $|x|\to\infty$.

A second source of estimates is the expansion
\begin{equation}
\label{eq:series.omega}
\omega(x,t)=e^{-t}\sum_{n=1}^\infty\frac{t^nJ^{*n}(x)}{n!},
\end{equation}
which follows by using the Taylor series of the exponential in~\eqref{eq:transform.omega}.
This expression, recently derived in~\cite{BCF}, was used by the authors to obtain estimates giving the  behavior of $\omega$ as $|x|\to\infty$ for each fixed $t$. However, though they give the right order of spatial decay for each time, they become very poor as $t\to\infty$.

Hence we need a different approach: we will  obtain \lq good' estimates through a comparison argument.
Indeed, as observed in~\cite{TW}, $\omega$ is a solution to
\begin{equation}\label{eq-W}
\begin{cases}
\partial_t\omega(x,t)-L\omega(x,t)=e^{-t}J(x)\quad&\mbox{in}\quad\R^N\times(0,\infty),\\
\omega(x,0)=0\quad&\mbox{in}\quad\R^N,
\end{cases}
\end{equation}
a fact that can be checked, for example, either by differentiating $\hat\omega(\xi,t)$ with respect to $t$ or using that $e^{-t}\delta(x)+\omega(x,t)$ is a solution of $\partial_t F-LF=0$. Good estimates will then follow from the use of appropriate barriers.
\begin{prop}
The function $\omega$ is nonnegative. Moreover, given a multi-index $\beta\in\mathbb{N}_0^N$ of order $|\beta|=s$,
\begin{eqnarray}\label{barriers}
|D^\beta \omega(x,t)|\le C\frac t{|x|^{N+2+s}},
\\[10pt]
\label{integral-estimates}
\int_{\R^N}|D^\beta \omega(x,t)|\,dx\le C t^{-s/2}.
\end{eqnarray}
\end{prop}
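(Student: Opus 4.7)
The plan is a barrier/moving-domain comparison. Nonnegativity of $\omega$ is immediate from the series~\eqref{eq:series.omega}, since every $J^{\ast n}$ is nonnegative; equivalently one may compare with the subsolution $0$ in~\eqref{eq-W}, where the source $e^{-t}J$ and the initial datum are both nonnegative.

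Since $L$ commutes with spatial derivatives, $v:=D^\beta\omega$ satisfies
$$
\partial_t v-Lv=e^{-t}D^\beta J(x),\qquad v(\cdot,0)=0,
$$
with right-hand side supported in $B(0;1)$. I would use the barrier
$$
\bar w(x,t):=At\,g_s(x),\qquad g_s(x):=(1+|x|^2)^{-(N+2+s)/2},
$$
on the moving exterior domain $\Omega(t):=\{|x|>R(t)\}$ with $R(t):=\max\{1,c\sqrt{t}\}$, and fix $A$ and $c$ large. In $\Omega(t)$ the forcing on $v$ vanishes, so $v$ solves the homogeneous equation there. For $\bar w$, a symmetric Taylor expansion around $x$ exploiting that $J$ is even and compactly supported, combined with $\int J(z)z_iz_j\,dz=2\alpha\delta_{ij}$, yields
$$
Lg_s(x)=\alpha\Delta g_s(x)+O(|x|^{-N-6-s})\ \text{ for }|x|\ge 2,
$$
and a direct computation gives $\alpha\Delta g_s(x)\le C_1|x|^{-N-4-s}$. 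Using $g_s(x)\ge c_0|x|^{-N-2-s}$ for $|x|\ge 1$,
$$
\partial_t\bar w-L\bar w=A\,g_s(x)-At\,Lg_s(x)\ge A\,g_s(x)\bigl(1-C_2\,t/|x|^2\bigr)\ge 0\ \text{ in }\Omega(t),
$$
as soon as $c^2\ge C_2$. Hence $\pm\bar w$ are super- and subsolutions of the homogeneous equation in $\Omega(t)$.

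To invoke Theorem~\ref{thm:contraction.general} (twice, with $\pm\bar w$ playing the role of the bound) via Remark~\ref{rem:comparison.general}, I only need the ordering $-\bar w\le v\le\bar w$ on the collar $(\Omega(t)+B(0;1))\setminus\Omega(t)$ for $t>0$, and on $\Omega(0)+B(0;1)$ at $t=0$, where it is trivial. On the collar $|x|\le R(t)$, and since $g_s$ is radially decreasing, $\bar w(x,t)\ge At\,g_s(R(t))$. For $t\le 1/c^2$ this reads $At\,g_s(1)$, which dominates the small-time bound $|v(x,t)|\le Ct$ obtained from~\eqref{eq:series.omega} together with $\|D^\beta J^{\ast n}\|_\infty\le\|D^\beta J\|_\infty$. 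For $t\ge 1/c^2$ it is of order $Ac^{-N-2-s}t^{-(N+s)/2}$, which dominates $|v|\le Ct^{-(N+s)/2}$ from~\eqref{decaimiento-W}. Fixing $A$ accordingly, the comparison yields $|v|\le\bar w\le At/|x|^{N+2+s}$ on $\R^N\times[0,\infty)$, which is~\eqref{barriers}.

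The integral estimate~\eqref{integral-estimates} then follows by splitting at $|x|=\sqrt{1+t}$: on the ball, the $L^\infty$ bound from~\eqref{decaimiento-W} integrates to $C(1+t)^{-(N+s)/2}|B_{\sqrt{1+t}}|\sim(1+t)^{-s/2}\lesssim t^{-s/2}$; on the complement, \eqref{barriers} gives
$$
Ct\int_{\sqrt{1+t}}^\infty r^{-3-s}\,dr\sim t(1+t)^{-(2+s)/2}\lesssim t^{-s/2}.
$$
The main technical point is the quantitative expansion of $Lg_s$: the evenness of $J$ is crucial, as it causes the first- and third-order Taylor contributions to vanish so that the leading term is exactly $\alpha\Delta g_s$; the compact support of $J$ then controls the fourth-order remainder uniformly in $x$ with $|x|\ge 2$ by $\sup_{|y-x|\le 1}|D^4g_s(y)|=O(|x|^{-N-6-s})$.
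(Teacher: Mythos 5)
Your proof is correct and is essentially the approach the paper intends: the paper establishes this proposition by appealing to the barrier-plus-comparison argument of \cite{TW} applied to equation \eqref{eq-W} (nonnegativity from the series expansion or from \cite{TW}, the cases $s=0,1$ proved there, the general case declared analogous), and you have simply written out that comparison in full, with the scaling-invariant barrier $At(1+|x|^2)^{-(N+2+s)/2}$ on the moving exterior domain $\{|x|>\max(1,c\sqrt t)\}$ and the matching logic for \eqref{integral-estimates}. The only point to tighten is that the supersolution inequality must also hold on the annulus $1<|x|<2$ for small $t$, where your Taylor expansion of $Lg_s$ (stated for $|x|\ge2$) does not directly apply; there $|Lg_s|$ is trivially bounded and $g_s$ is bounded below, so enlarging $C_2$ (equivalently $c$) disposes of it.
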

The cases $s=0,1$ were already proved in \cite{TW}, through a clever use of the above mentioned comparison principle from~\cite{LW}.  The proof of the general case is completely analogous. The fact that $\omega$ is nonnegative was also proved in \cite{TW} and can also be seen from the series expansion~\eqref{eq:series.omega}.

\begin{rem} Estimates \eqref{barriers} and \eqref{integral-estimates} are in some sense optimal, since they are invariant under the \lq parabolic' scaling $(\tau_k u)(x,t)=k^Nu(kx,k^2t)$. This scaling will play a major part in some of our proofs.
\end{rem}

%%%%%%%%%%%%%%%%%%%%%%%%%%%%%%%%%%%%%%%%%%%%%%%%%%%%%%%%%5
\section{The stationary problem}
\setcounter{equation}{0}

As we have seen in the introduction, the $L$-harmonic function
$\phi$ solving~\eqref{stationary} plays a key role in the description
of the asymptotic behavior of solutions to~\eqref{problem}. In this
section we prove that problem~\eqref{stationary} is well
posed, and  we obtain some properties of $\phi$ that will be
required later. Though not needed in the sequel, we complete our study of the stationary solution $\phi$ by describing precisely its behavior at infinity.

\

\noindent\emph{Remark on the notion of solution. } In principle we
only need $\phi\in L^1_{\text{loc}}(\mathbb{R}^N)$, so that the
convolution makes sense. But the equation implies that $\phi$ has in
$\Omega$ the same regularity as $J$. Hence $\phi$ is at least
continuous in $\Omega$ and, in our case,  $\phi$ is smooth in $\Omega$.  However, $\phi$ is not continuous across the
boundary of $\Omega$.

\

\subsection{Existence and uniqueness}
To prove existence we will approximate the domain $\Omega$ by a
sequence of bounded domains, $\Omega_n=\Omega\cap B(0;n)$. Let us
start by studying the Dirichlet problem for such bounded domains.

\begin{lema}\label{existence and comparison}
Let $G\subset\mathbb{R}^N$ open and bounded.

\begin{itemize}
\item[(i)] Given $f \in
L^\infty(G^c)$, there exists a solution to  problem
$$
L\phi(x)=0\mbox{ in }\overline G, \qquad \phi=f(x)\mbox{ in
}\overline G^c.
$$
\item[(ii)] Let $\phi_1,\phi_2\in L^\infty(\R^N)\cap C(\overline G)$ such that
$$
L\phi_1\ge L\phi_2\mbox{ in }\overline G,\qquad \phi_1\le\phi_2
\mbox{ in }\overline G^c.
$$
Then $\phi_1\le\phi_2$ in $\R^N$.
\end{itemize}
\end{lema}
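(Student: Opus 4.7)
The plan is to prove (ii) first as a nonlocal strong maximum principle, and then to deduce (i) from (ii) via a Fredholm-alternative argument on $C(\overline G)$.

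For (ii), I would set $w=\phi_1-\phi_2$; then $w\in L^\infty(\R^N)\cap C(\overline G)$, $Lw\ge 0$ in $\overline G$, and $w\le 0$ in $\overline G^c$. Assuming, for a contradiction, that $M:=\max_{\overline G}w>0$, I would fix $r_0,\delta_0>0$ with $J\ge\delta_0$ on $B(0,r_0)$ (available by continuity of $J$ and $J(0)>0$). The basic identity
\[
0\le Lw(x)=\int_{\R^N}J(x-y)(w(y)-M)\,dy \quad\text{for every } x\in\{w=M\}\cap\overline G
\]
has a nonpositive integrand, since $w-M\le 0$ on $\overline G$ and $w-M\le -M$ on $\overline G^c$. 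I would then pick $x^{\ast}$ minimizing $\operatorname{dist}(\cdot,\overline G^c)$ over the compact set $\{w=M\}\cap\overline G$. If $\operatorname{dist}(x^{\ast},\overline G^c)<r_0$, openness of $\overline G^c$ forces $|B(x^{\ast},r_0)\cap\overline G^c|>0$, and the identity yields $Lw(x^{\ast})\le -\delta_0 M\,|B(x^{\ast},r_0)\cap\overline G^c|<0$, a contradiction. Otherwise $B(x^{\ast},r_0)\subset G$ and the identity forces $w\equiv M$ a.e.\ on $B(x^{\ast},r_0)$, hence on the whole ball by continuity; but this ball contains points strictly closer to $\overline G^c$ than $x^{\ast}$, contradicting its choice.

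For (i), I would recast the problem as the Fredholm equation $(I-T)\phi=g$ on $C(\overline G)$, where
\[
T\phi(x)=\int_{\overline G}J(x-y)\phi(y)\,dy, \qquad g(x)=\int_{\overline G^c}J(x-y)f(y)\,dy.
\]
Both $T\phi$ and $g$ are continuous on $\overline G$ by dominated convergence. Smoothness and compact support of $J$ give a uniform Lipschitz bound $|T\phi(x_1)-T\phi(x_2)|\le\|\phi\|_\infty\operatorname{Lip}(J)|x_1-x_2|$, so $T$ sends the unit ball of $C(\overline G)$ into an equicontinuous bounded family, and Ascoli-Arzel\`a makes $T$ compact. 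The Fredholm alternative then reduces surjectivity of $I-T$ to its injectivity; and if $\phi=T\phi$ on $\overline G$, extending $\phi$ by $0$ outside $\overline G$ produces $L\phi=0$ in $\overline G$ and $\phi=0$ on $\overline G^c$, so applying (ii) with $\phi_1=\pm\phi$, $\phi_2=0$ gives $\phi\equiv 0$. Thus $I-T$ is invertible, yielding the desired solution.

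The main obstacle is (ii), because the $\phi_i$ are merely bounded outside $\overline G$ and may be discontinuous across $\partial G$, so no local maximum-principle reasoning applies. The hypothesis $J(0)>0$ plays a crucial role in two ways: it supplies the strict negative contribution of the $\overline G^c$-part of the integral at points close to $\partial G$, and it is what allows the level set $\{w=M\}$ to spread along a full ball in the interior, which is precisely what lets the minimizing point $x^{\ast}$ be driven to $\partial G$ and close the argument.
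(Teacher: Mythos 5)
Your proof is correct, but it takes a genuinely different route from the paper's on both parts. For existence the paper does not use the Fredholm alternative: it applies Schauder's fixed point theorem to $\mathcal{T}\psi(x)=\int_G J(x-y)\psi(y)\,dy+\int_{G^c}J(x-y)f(y)\,dy$ on the ball $\{\|\psi\|_{L^\infty(G)}\le\|f\|_{L^\infty(G^c)}\}$ of $C(\overline G)$, exploiting the same compactness (a uniform $C^1$ bound on the image) that you use to make $T$ compact. Both routes are valid; yours makes existence logically dependent on part (ii) (through injectivity of $I-T$) and delivers uniqueness along the way, whereas Schauder yields existence independently and gives the a priori bound $\|\phi\|_\infty\le\|f\|_\infty$ for free. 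For the comparison principle the paper likewise argues at a maximum point and uses $J(0)>0$ to spread the level set $\{w=M\}$, but its topological bookkeeping is different: it shows that $\{w=M\}$ is open and closed in the closure of a connected component $\overline{G_i}$, concludes $w\equiv M$ there, and reaches the contradiction at a boundary point of $G_i$ where $\overline{G_i}^{\,c}$ has positive density. Your device of minimizing $\operatorname{dist}(\cdot,\overline G^c)$ over the compact set $\{w=M\}\cap\overline G$ replaces both the connectedness argument and the (slightly delicate) existence of such a density point by the cleaner observation that $\overline G^c$ is open and nonempty; this is a genuine simplification. One cosmetic slip: in your second case $B(x^*,r_0)$ is only guaranteed to lie in $\overline G$, not in $G$, but since $w$ is continuous on all of $\overline G$ the passage from a.e.\ equality to everywhere equality, and the final contradiction with the minimality of $x^*$, go through unchanged.
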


\begin{proof} \textsf{Existence. } Let
$ {\mathcal K}= \{\psi\in C(\overline G):
\|\psi\|_{L^\infty(G)}\le\|f\|_{L^\infty(G^c)}\}$. We
define an operator  ${\mathcal T}:{\mathcal K}\to{\mathcal K}$
through the formula
$$
\left({\mathcal T}\psi\right)(x)=\int_G
J(x-y)\psi(y)\,dy+\int_{G^c} J(x-y)f(y)\,dy.
$$
It is easy to see that $\|{\mathcal
T}\psi\|_{L^\infty(\overline G)}\le \|f\|_{L^\infty(G^c)}$ and  $\|{\mathcal
T}\psi\|_{C^1(\overline G)}\le C_J\|f\|_{L^\infty(G^c)}$.
Hence, by Schauder's fixed point theorem, ${\mathcal T}$ has a fixed
point $\hat\phi\in\mathcal K$. Then, the function
\[\phi(x)=\begin{cases}
\hat\phi(x),\quad&x\in\overline G,\\[6pt]
f(x),\quad&x\in \overline G^c.\end{cases}
\]
is the sought solution.

\noindent\textsf{Comparison. } Let
$\phi=\phi_1-\phi_2$. Then,
\[
\phi(x)\le \int_G J(x-y)\phi(y)\,dy.
\]
Assume for contradiction that $M=\max_{\overline G}\phi>0$. Let
$x_0\in\overline G$ be a point at which the maximum is attained
and let $G_i$ be the corresponding connected component. The set
$K=\{x\in\overline G_i:\phi(x)=M\}$ is nonempty and closed. On
the other hand,
\[
M=\phi(x_0)\le \int_G J(x_0-y)\phi(y)\,dy\le M\int_G J(x_0-y)\,dy\le M.
\]
This is only possible if $\phi(x)=M$ in
$\overline G \cap\mathop{\rm supp\,}J(x_0-\cdot)$. Since $J(0)>0$,
this implies that $\phi(x)=M$ in $\overline G\cap B(x_0;\delta)$
for some $\delta>0$; hence $K$ is an open subset of
$\overline G _i$. Since $\overline G_i$ is connected, we
deduce that $K=\overline G_i$, so that, $\phi\equiv M>0$ in
$\overline G_i$. But this implies that
$$
M=\phi(x_0)\le M\int_G J(x_0-y)\,dy<M
$$
for any $x_0\in\partial G_i$, a contradiction at any point where the density of $\overline G_i^c$ is positive.
\end{proof}

In order to prove that the function $\phi$ obtained in the limit procedure approaches 1 at infinity, we will compare the approximate solutions defined in bounded sets with a suitable barrier $z$. Hence we need to  estimate $Lz$.
\begin{lema}
\label{lemma:L.z}
Let $z(x)=(|x|^2+b)^{-\gamma}$,
 $\gamma>0$. There exists a number $b=b(J,\gamma,N)>0$ such that
\begin{equation}\label{potencia}
Lz(x)\le -2\alpha\,\gamma(|x|^2+b)^{-(\gamma+1)}(N-2- 2\gamma),
\end{equation}
where $\alpha$ is the constant defined in \eqref{eq:definition.alpha}.
\end{lema}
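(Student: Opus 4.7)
My plan is to expand $Lz$ via Taylor's formula in $h$ around $0$, use the radial symmetry of $J$ to cancel the odd-order contributions, and recognise the quadratic term as $\alpha\Delta z(x)$. A direct computation of $\Delta z$ will produce the target right-hand side of~\eqref{potencia} as its leading term, together with a strictly negative next-order term of size $s^{-(\gamma+2)}$ (where $s:=|x|^2+b$); the fourth-order Taylor remainder will then be absorbed into this next-order term by taking $b$ large.

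Concretely, since $z\in C^\infty(\mathbb{R}^N)$ and $J$ is supported in $B(0;1)$, I would write, for each $x\in\mathbb{R}^N$ and each $h\in B(0;1)$,
\[
z(x+h)-z(x)=\nabla z(x)\cdot h+\tfrac12 D^2z(x)[h,h]+\tfrac16 D^3z(x)[h,h,h]+\tfrac1{24}D^4z(\xi)[h,h,h,h],
\]
for some $\xi=\xi(x,h)$ on the segment joining $x$ to $x+h$. Radial symmetry of $J$ gives $\int J(h)h_i\,dh=\int J(h)h_ih_jh_k\,dh=0$ and $\int J(h)h_ih_j\,dh=2\alpha\delta_{ij}$ (by~\eqref{eq:definition.alpha}), so integration against $J$ collapses the identity into
\[
Lz(x)=\alpha\,\Delta z(x)+\tfrac1{24}\int J(h)\,D^4z(\xi(x,h))[h,h,h,h]\,dh.
\]
A direct differentiation of $z$ yields
\[
\alpha\,\Delta z(x)=-2\alpha\gamma(N-2-2\gamma)\,s^{-(\gamma+1)}-4\alpha\gamma(\gamma+1)\,b\,s^{-(\gamma+2)},
\]
so the first summand is precisely the right-hand side of~\eqref{potencia}. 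For the remainder one checks the pointwise bound $|D^4z(y)|\le C_{\gamma,N}(|y|^2+b)^{-(\gamma+2)}$; since $|\xi|\le|x|+1$, a short case split ($|x|\le 2$ versus $|x|\ge 2$) shows that for $b\ge 4$ one has $|\xi|^2+b\ge \tfrac14(|x|^2+b)$ uniformly in $x$, whence, using also $\int J(h)|h|^4\,dh\le 1$, the remainder is bounded by $C'_{\gamma,N}\,s^{-(\gamma+2)}$. Combining,
\[
Lz(x)\le -2\alpha\gamma(N-2-2\gamma)\,s^{-(\gamma+1)}+\bigl(C'_{\gamma,N}-4\alpha\gamma(\gamma+1)\,b\bigr)s^{-(\gamma+2)},
\]
and it suffices to take $b$ so large that $4\alpha\gamma(\gamma+1)\,b\ge C'_{\gamma,N}$.

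The main obstacle I anticipate is making the remainder estimate \emph{uniform} in $x$: the intermediate point $\xi$ depends on both $x$ and $h$, and for small $|x|$ it need not be comparable to $x$. This is exactly what the threshold $b\ge 4$ handles (for small $|x|$ the quantity $s$ is pinned from below by $b$, making the comparison trivial), which is also why the claim asserts the \emph{existence} of $b$ rather than allowing an arbitrary one. Observe that the argument is agnostic about the sign of $N-2-2\gamma$, so the single estimate~\eqref{potencia} is established for every $\gamma>0$.
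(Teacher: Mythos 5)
Your proof is correct and follows essentially the same route as the paper's: a fourth-order Taylor expansion of $z$ combined with the radial symmetry of $J$ to isolate $\alpha\Delta z$, an explicit computation of $\Delta z$ producing the two displayed terms, and absorption of the $O\big((|x|^2+b)^{-(\gamma+2)}\big)$ remainder by taking $b$ large. You have simply carried out in detail the ``cumbersome, but easy, computations'' the paper alludes to, including the useful observation that the remainder bound is uniform in $x$ once $b\ge4$.
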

\begin{proof}
After some cumbersome, but easy,
computations, using Taylor's expansion and the radial symmetry of
$J$, we find that
\begin{equation*}
\begin{aligned}
Lz(x)=& -2\alpha\,\gamma(|x|^2+b)^{-(\gamma+1)}(N-2- 2\gamma)-4\alpha\,\gamma(\gamma+1)(|x|^2+b)^{-(\gamma+2)} b\\
&+ O\big((|x|^2+b)^{-(\gamma+2)}\big),
\end{aligned}
\end{equation*}
from where the result follows just choosing $b$ large enough.
\end{proof}

\begin{prop} There exists a unique solution to \eqref{stationary}. Moreover,
 $0\le \phi\le 1$ and there exists a
positive constant $K$ such that
\begin{equation}\label{cotas-fi}
1-\phi(x)\le \frac{K}{\big(|x|^2+1\big)^{(N-2)/2}},\qquad x\in\Omega.
\end{equation}
\end{prop}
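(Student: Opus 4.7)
The plan is to build $\phi$ as the monotone limit of solutions on bounded exhaustions of $\Omega$, to control the decay of $1-\phi$ at infinity with an explicit barrier at the critical rate, and to establish uniqueness by a perturbed comparison on the bounded truncation. For $n$ so large that $\mathcal{H}\subset B(0;n)$, set $\Omega_n=\Omega\cap B(0;n)$ and invoke Lemma~\ref{existence and comparison}(i) with boundary data $f=0$ on $\mathcal{H}$ and $f=1$ on $\mathbb{R}^N\setminus\overline{B(0;n)}$ to obtain $\phi_n$ with $L\phi_n=0$ in $\overline{\Omega_n}$. Lemma~\ref{existence and comparison}(ii) against the constants $0$ and $1$ gives $0\le\phi_n\le 1$; applied between $\phi_{n+1}$ and $\phi_n$ on $\Omega_n$ (both $L$-harmonic there, both vanishing on $\mathcal{H}$, and $\phi_{n+1}\le 1=\phi_n$ outside $B(0;n)$) it yields the monotonicity $\phi_{n+1}\le\phi_n$. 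Hence $\phi:=\lim_n\phi_n$ exists pointwise with $0\le\phi\le 1$ and $\phi\equiv 0$ on $\mathcal{H}$; dominated convergence for $J*\phi_n\to J*\phi$ (using $0\le\phi_n\le 1$ and $J\in L^1$) then transfers $L\phi_n=0$ to $L\phi=0$ in $\Omega$.

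To secure the decay bound~\eqref{cotas-fi} and the limit $\phi\to 1$ at infinity I will use a lower barrier. Set $z(x)=(|x|^2+b)^{-(N-2)/2}$; Lemma~\ref{lemma:L.z} applied at the critical exponent $\gamma=(N-2)/2$ gives $Lz(x)\le 0$ on $\R^N$ once $b$ is chosen large enough. Pick $K>0$ so that $Kz\ge 1$ on the bounded set $\mathcal{H}$, which is possible since $z$ is bounded away from $0$ there. The function $v:=1-Kz$ then satisfies $Lv=-KLz\ge 0$, while $v\le 0=\phi_n$ on $\mathcal{H}$ and $v\le 1=\phi_n$ outside $B(0;n)$, so Lemma~\ref{existence and comparison}(ii) gives $v\le\phi_n$ everywhere. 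Letting $n\to\infty$ and comparing $(|x|^2+b)^{-(N-2)/2}$ with $(|x|^2+1)^{-(N-2)/2}$ yields~\eqref{cotas-fi} after possibly enlarging the constant, and in particular $\phi(x)\to 1$ as $|x|\to\infty$.

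Uniqueness will be handled by a perturbed comparison. Let $\phi_1,\phi_2$ be two bounded solutions of~\eqref{stationary}. Given $\varepsilon>0$, pick $R$ so large that $|\phi_i(x)-1|<\varepsilon/2$ for $|x|\ge R$; then $\phi_1\le\phi_2+\varepsilon$ on $\R^N\setminus\overline{B(0;R)}$, and the same inequality holds trivially on $\mathcal{H}$ where both $\phi_i$ vanish. Applying Lemma~\ref{existence and comparison}(ii) on $G=\Omega\cap B(0;R)$ to $\phi_1$ and $\phi_2+\varepsilon$ (both $L$-harmonic in $G$ and ordered on $G^c$) gives $\phi_1\le\phi_2+\varepsilon$ on all of $\R^N$; sending $\varepsilon\to 0$ and then interchanging the roles of $\phi_1$ and $\phi_2$ produces $\phi_1=\phi_2$. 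The main technical point in the whole argument is the invocation of Lemma~\ref{lemma:L.z} at exactly the critical exponent $\gamma=(N-2)/2$, where the principal coefficient in its bound vanishes: the weaker inequality $Lz\le 0$ that remains is precisely what is needed, and it does hold provided $b$ is large enough for the subleading negative contribution in the proof of that lemma to dominate the $O$-remainder.
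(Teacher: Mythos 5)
Your proposal is correct. The existence and decay parts follow the paper essentially verbatim: the same exhaustion $\Omega_n=\Omega\cap B(0;n)$ with data $0$ on $\mathcal{H}$ and $1$ outside $B(0;n)$, the same monotone limit via Lemma~\ref{existence and comparison}(ii), and the same barrier $z=(|x|^2+b)^{-(N-2)/2}$ at the critical exponent $\gamma=(N-2)/2$ of Lemma~\ref{lemma:L.z}, where indeed only $Lz\le0$ survives and $b$ large makes the subleading term absorb the remainder --- you phrase the comparison as $1-Kz\le\phi_n$ where the paper writes $Kz\ge 1-\phi_n$, which is the identical inequality. Where you genuinely depart from the paper is uniqueness. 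The paper uses a sliding argument: it sets $\overline\phi_\lambda=\overline\phi+\lambda$, takes $\lambda_0=\inf\{\lambda:\overline\phi_\lambda\ge\phi\}$, and rules out $\lambda_0>0$ by a strong-maximum-principle contradiction at a boundary point of the contact set (using $J(0)>0$, connectedness, and positive density of the complement), i.e.\ it reproves a comparison principle directly in the unbounded domain. You instead perturb by $\varepsilon$, choose $R$ so that both solutions are within $\varepsilon/2$ of $1$ outside $B(0;R)$, and reapply the already established bounded-domain comparison of Lemma~\ref{existence and comparison}(ii) to $\phi_1$ and $\phi_2+\varepsilon$ on $G=\Omega\cap B(0;R)$ (legitimate since $L$ annihilates constants, $\int J=1$). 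Your route is shorter and recycles existing machinery rather than redoing the contact-set argument in the exterior domain; the paper's sliding argument has the mild advantage of not needing to truncate, but both ultimately use only $\phi\to1$ at infinity and the same maximum principle. Either proof is complete.
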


\begin{proof}\textsf{Existence. }
Let $n_0$ in  $\mathbb{N}$ such that $\mathcal{H} \subset B(0;n_0)$. Consider
the sequence of functions $\{\phi_n(x)\}_{n\geq n_0}$, where
$\phi_n$ is the solution to
\begin{equation}\label{stationary-n}
\begin{cases}
L\phi_n(x)=0 \quad&\mbox{in }B(0;n) \setminus \mathcal{H},\\
\phi_n=0&\mbox{in }\mathcal{H},\\
\phi_n=1&\mbox{in }B(0;n)^c.\\
\end{cases}
\end{equation}
A simple comparison argument proves that $0\le \phi_{n}(x)\le 1$ for
all $n\ge n_0$, $x\in\mathbb{R}^N$. Hence,
$\phi_n(x)\ge\phi_{n+1}(x)$ in $(B(0;n))^c$, which yields, again by
comparison,   $0\leq \phi_{n+1}(x)\leq \phi_n (x)\leq 1$ for all
$n\ge n_0$, $x\in\mathbb{R}^N$. Therefore, the monotone limit
$$
\phi(x)= \lim_{n \rightarrow \infty} \phi_n(x)
$$
is well defined, and satisfies $0\le\phi\le1$. Letting $n\to\infty$
in \eqref{stationary-n}, we get
$$
L\phi(x)=0\mbox{ in}\quad\Omega,\qquad \phi=0\mbox{ in } \mathcal{H}.
$$

We still have to prove \eqref{cotas-fi},  which implies in
particular that $\phi(x)\to 1$ as $|x|\to\infty$. This is done
by comparison with the barrier $z$ described in Lemma~\ref{lemma:L.z} with $2\gamma=N-2$. Indeed, by taking
$K$ large enough we have,
$$
 Kz\ge1\ge 1-\phi_n \ \text{in }\mathcal{H},\quad Kz\ge
1-\phi_n =0 \ \text{in }(B(0;n))^c,\quad L (Kz)\le 0= L (1-\phi_n)\
\text{in }B(0;n) \setminus \mathcal{H}.
$$
Hence, $Kz\geq1-\phi_n$ in $\R^N$ and, passing to the limit, we
obtain \eqref{cotas-fi}.

\noindent\textsf{Uniqueness. } Let  $\phi$ and $\overline \phi$ be
two solutions to~\eqref{stationary}. We consider the family of
functions $\overline \phi_\lambda(x):=\overline \phi(x)+\lambda$.
Since both $\phi$ and $\overline\phi$ are bounded, there is a value
$\lambda_1$ such that $\overline \phi_{\lambda_1}\ge \phi$ in
$\R^N$. Let
$$
\lambda_0=\inf\{\lambda:\overline \phi_\lambda\ge \phi\mbox{ in }\R^N\}.
$$
We shall prove that $\lambda_0=0$, which means that $\overline
\phi\ge\phi$ in $\R^N$, from where uniqueness follows, since the
roles of $\overline \phi$ and $\phi$ can be interchanged.

Assume for contradiction that $\lambda_0>0$. Since  $\phi(x)$ and
$\overline \phi(x)$ tend to $1$ as $|x|\to\infty$, there exists
$R_1$ such that $\overline \phi_{\lambda_0}(x)>\phi(x)$ if $|x|\geq
R_1$. Moreover,  $\overline \phi_{\lambda_0}>\phi$ in $\mathcal{H}$. Hence, as
both $\overline \phi$ and $ \phi $ are continuous in $R^N\setminus
\mathcal{H}$, there exists $x_0$ in $\overline{B(0;R_1)}\setminus \mathcal{H}$ such that
$\overline \phi_{\lambda_0}(x_0)=\phi(x_0)$. The set
$$
S=\{ x \in \overline{B(0;R_1)} \setminus \mathcal{H}: \overline
\phi_{\lambda_0}(x)=\phi(x)\}
$$
is closed and non-empty. If $x_1$ is a point in  $\partial S$ where the density of $S^c$ is
positive, then
 $\overline \phi_{\lambda_0}(x_1)=\phi(x_1)$ but $J*\overline
\phi_{\lambda_0}(x_1)>J*\phi(x_1)$. Since this is a contradiction we
get that $\lambda_0=0$.
\end{proof}

\subsection{Asymptotic properties}
We now obtain  the decay at infinity of the
derivatives  of $\phi$. In the sequel we let $\psi=1-\phi$.

\begin{prop} Given $s\in\N_0$, there exists a constant $C_s$ such that, for every multi-index $\beta\in\N_0^N$ of order $|\beta|=s$,
\begin{equation}\label{derivative-psi}
|D^\beta\psi(x)|\le \frac {C_s}{|x|^{N-2+s}},\qquad x\in\Omega.
\end{equation}
\end{prop}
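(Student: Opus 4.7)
The base case $s=0$ is exactly \eqref{cotas-fi}. For $s\ge1$ the plan is to represent $\psi$ as a convolution with a stationary kernel built from the regular part $\omega$ of the fundamental solution, and then to differentiate under the integral sign. Since $L\phi=0$ in $\Omega$, the source $f:=-L\psi$ vanishes there and equals $1-J*\psi$ on $\mathcal{H}$, so $f$ is bounded with $\mathop{\rm supp} f\subset\overline{\mathcal{H}}$. Set
\[
W(x):=\int_0^\infty \omega(x,t)\,dt.
\]
Combining the estimate $|D^\beta\omega(x,t)|\le Ct^{-(N+s)/2}$ from \eqref{decaimiento-W} with $|D^\beta\omega(x,t)|\le Ct/|x|^{N+2+s}$ from \eqref{barriers}, and splitting the integral at $t=|x|^2$, one obtains
\[
|D^\beta W(x)|\le \frac{C_s}{|x|^{N-2+s}},\qquad |x|\ge 1.
\]

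The central step is to prove $\psi=f+W*f$ on $\R^N$. Let $u(x,t):=\int F(x-y,t)f(y)\,dy$ be the solution of the Cauchy problem $u_t=Lu$ with datum $f$. From $F=e^{-t}\delta+\omega$ one gets $u(\cdot,t)=e^{-t}f+\omega(\cdot,t)*f$, while the $O(t^{-N/2})$ decay available for $L^1\cap L^\infty$ data (recall $N\ge 3$) makes $v(x):=\int_0^\infty u(x,t)\,dt$ pointwise well defined, bounded, and vanishing at infinity. Fubini yields $v=f+W*f$, and integrating $u_t=Lu$ in $t$ yields $-Lv=f$. Hence $\psi$ and $v$ are both bounded solutions of $-L\,\cdot\,=f$ on $\R^N$ that tend to $0$ at infinity, so their difference is a bounded $L$-harmonic function on all of $\R^N$ decaying at infinity. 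A Liouville-type argument — exactly the strong-maximum step used in the proof of Lemma~\ref{existence and comparison}(ii), applied on $\R^N$ (which is connected) — then forces the difference to be identically zero.

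Once the representation holds, fix $x\in\Omega$ with $\mathop{\rm dist}(x,\mathcal{H})\ge 1$, so that $f(x)=0$, and differentiate under the integral to get $D^\beta\psi(x)=\int_{\mathcal{H}}D^\beta W(x-y)f(y)\,dy$. For $|x|\ge 2\,\mathop{\rm diam}(\mathcal{H}\cup\{0\})$ the distance $|x-y|$ is at least $|x|/2$ for every $y\in\mathcal{H}$, and the bound on $D^\beta W$ yields
\[
|D^\beta\psi(x)|\le\frac{C_s\|f\|_{L^\infty}|\mathcal{H}|}{|x|^{N-2+s}}.
\]
For the remaining bounded portion of $\Omega$ the estimate is trivial: $\psi$ is smooth there with uniformly bounded derivatives of all orders (obtained by differentiating $\phi(x)=\int_\Omega J(x-y)\phi(y)\,dy$, which holds for every $x\in\Omega$ because $\phi$ vanishes on $\mathcal{H}$), while $|x|$ stays away from $0$ because $0\in\mathcal{H}$. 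I expect the delicate point to be the rigorous justification of $\psi=f+W*f$, which rests on the Liouville statement that bounded $L$-harmonic functions on $\R^N$ decaying at infinity must vanish, together with enough $t$-decay to interchange integrals in the definition of $W*f$.
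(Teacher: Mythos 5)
Your argument is correct, but it follows a genuinely different route from the paper. The paper also starts from the fundamental solution, but in a time-dependent way: it applies the representation formula \eqref{eq:representation.formula} to the stationary solution $\phi$, rescales $\psi^k(x)=k^{N-2}\psi(kx)$, $\omega_k(x,t)=k^N\omega(kx,k^2t)$, and then couples the time parameter to the spatial scale by choosing $t=k^2$; the scale invariance of \eqref{barriers} and \eqref{integral-estimates} then yields a uniform bound on $D^\beta\psi^k$ away from the origin, which unscales to \eqref{derivative-psi}. You instead integrate out the time variable once and for all, producing the elliptic kernel $W=\int_0^\infty\omega(\cdot,t)\,dt$ (the regular part of a Green function for $-L$, finite precisely because $N\ge3$), proving the representation $\psi=f+W*f$ with $f=-L\psi$ bounded and supported in $\overline{\mathcal H}$, and differentiating the convolution; the extra ingredient you need is the Liouville statement that a bounded $L$-harmonic function on all of $\mathbb{R}^N$ tending to $0$ at infinity vanishes, which indeed follows from the open-and-closed argument of Lemma~\ref{existence and comparison}(ii) once one notes that $h=J*h$ forces $h$ to be continuous, that the decay at infinity makes the supremum attained, and that connectedness of $\mathbb{R}^N$ replaces the boundary contradiction. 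Both proofs consume the same estimates on $\omega$ (you use \eqref{barriers} together with \eqref{decaimiento-W}, the paper uses \eqref{barriers} together with \eqref{integral-estimates}), and both use the $s=0$ bound \eqref{cotas-fi} as input. What your approach buys is a clean, static potential-theoretic formula $\psi=f+W*f$ from which all derivative bounds (and, with a little more work, the refined asymptotics \eqref{asym-psi}) drop out by differentiating a kernel supported computation; what it costs is the uniqueness/Liouville step, which the paper's proof avoids entirely by working with the evolution of $\phi$ itself rather than reconstructing $\psi$ from its $L$-Laplacian.
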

\begin{proof}
The function $\phi$ is the unique bounded solution to the evolution problem~\eqref{problem} with initial data $\phi$. Hence, using the representation formula \eqref{eq:representation.formula} with $t_0=0$ and  $u(x,t_0)=\phi(x)$, we get,
\[
\begin{aligned}
\phi(x)&=\frac1{1-e^{-t}}\int_{\mathbb{R}^N} \omega(x-y,t)\phi(y)\,dy-\X_\mathcal{H}(x)(J*\phi)(x)\\
&-\frac1{1-e^{-t}}\int_0^t\int_\mathcal{H} \omega(x-y,t-s)(J*\phi)(y)\,dy\,ds.
\end{aligned}
\]
Therefore,   since $\int_{\mathbb{R}^N}\omega(x,t)\,dx=1-e^{-t}$, we have
\begin{equation}
\label{eq:phi}
\begin{aligned}
\psi(x)&=\frac1{1-e^{-t}}\int_{\mathbb{R}^N}\omega(x-y,t)\psi(y)\,dy+\X_\mathcal{H}(x)(J*\phi)(x)\\\
&+\frac1{1-e^{-t}}\int_0^t\int_{\mathcal{H}} \omega(x-y,t-s)(J*\phi)(y)\,dy\,ds.
\end{aligned}
\end{equation}
Given $k>0$, we denote
$$
\phi^k(x)=k^{N-2}\phi(kx),\quad \psi^k(x)=k^{N-2}\psi(kx),\quad
J_k(x)=k^NJ(kx),\quad \omega_k(x,t)=k^N(kx,k^2t),
$$
and
$\mathcal{H}_k=\{x\in\R^N:kx\in \mathcal{H}\}$. If we scale
equation~\eqref{eq:phi} according to these recipes, we get
\[
\begin{aligned}
\psi^k(x)&= \frac1{1-e^{-t}}\int_{\mathbb{R}^N} \omega_k(x-y,k^{-2}t)\psi^k(y)\,dy+\X_{\mathcal{H}_k}(x)(J_k*\phi^k)(x)\\
&+\frac1{1-e^{-t}}\int_0^t\int_{\mathcal{H}_k}\omega_k(x-y,k^{-2}(t-s))(J_k*\phi^k)(y)\,dy\,ds.
\end{aligned}
\]
Let $\delta>0$ be given. There exists a value $k_\delta$ such that
$\X_{\mathcal{H}_k}(x)=0$ if $|x|\ge\delta$ and $k\ge k_\delta$. Therefore,
\[
\begin{aligned}
D^\beta\psi^k(x)&=\underbrace{\frac1{1-e^{-t}}\int_{\mathbb{R}^N}D^\beta \omega_k(x-y,k^{-2}t)\psi^k(y)\,dy}_{\mathcal{A}}\\
&+\underbrace{\frac1{1-e^{-t}}\int_0^t\int_{\mathcal{H}_k}D^\beta \omega_k(x-y,k^{-2}(t-s))(J_k*\phi^k)(y)\,dy\,ds}_{\mathcal{B}},
\end{aligned}
\]
if $|x|\ge\delta$ and $k\ge k_\delta$.

We start by estimating $\mathcal{B}$. If $y\in \mathcal{H}_k$ and $|x|\ge\delta$,
then $|x-y|\ge\delta/2$ if $k\ge k_\delta$. On the other
hand, since $0\le\phi^k(x)\le k^{N-2}$, we also have $0\le
J_k*\phi^k\le k^{N-2}$. So, recalling that $|\mathcal{H}_k|=Ck^{-N}$ and the
bounds in \eqref{barriers} -- that are scaling invariant --  we get
\[
\begin{aligned}
|\mathcal{B}|&\le  \frac{k^{N-2}}{1-e^{-t}}\int_0^t\int_{\mathcal{H}_k}\frac {Ck^{-2}(t-s)}{|x-y|^{N+s+2}}\,dy\,ds\\
&\le C_{\delta,N}\frac{k^{-4}}{1-e^{-t}}\int_0^t (t-s)\,ds=C_{\delta,N}\frac{k^{-4}}{1-e^{-t}}t^2\le C_{\delta,N}\quad\mbox{if}\quad t=k^2.
\end{aligned}
\]

Now we estimate $\mathcal{A}$ with $t=k^2$. Recall that $0\le\psi(x)\le
C/|x|^{N-2}$. Therefore we have,  using again
\eqref{barriers} and \eqref{integral-estimates},
\[
\begin{aligned}
|\mathcal{A}|&\le \frac1{1-e^{-k^2}}\int_{|y|<\delta/2}|D^\beta \omega_k(x-y,1)|\frac C{|y|^{N-2}}\,dy+\frac{C_{\delta,N}}{1-e^{-k^2}}\int_{|y|\ge\delta/2} |D^\beta \omega_k(x-y,1)|\,dy\\
&\le \frac{C_{\delta,N}}{1-e^{-k^2}}\int_{|y|<\delta/2}\frac{dy}{|y|^{N-2}}+
\frac{C_{\delta,N}}{1-e^{-k^2}}\le C_{\delta,N}.
\end{aligned}
\]

In conclusion, given $\delta>0$ there exists $k_\delta$ such that,
if $k\ge k_\delta$ and $|x|\ge\delta$, then
$|D^\beta\psi^k(x)|\le C_{\delta,N}$.

Let us take $|x|=1$, $C_1=C_{1,N}$, $y=kx$ so that $k=|y|$. Then, we
get
\[
|y|^{N-2+s}|D^\beta\psi(y)|=k^{N-2+s}|D^\beta\psi(kx)|=|D^\beta\psi^k(x)|\le
C_1\quad\mbox{if}\quad |y|=k\ge k_1.
\]
This proves estimate \eqref{derivative-psi} except for a bounded set. However, $\phi$ is smooth in $\Omega$, and hence the estimate is obviously true for bounded sets.
\end{proof}

\begin{rem}
The proof of the bound~\eqref{derivative-psi} only requires $J\in C^{s+1}$.
\end{rem}

Though the second derivatives of $\psi$ decay like $O(|x|^{-N})$ as $|x|\to\infty$, a special combination, the Laplacian, decays faster. In particular, $\Delta\psi$ is integrable at infinity.

\begin{coro}\label{Lpsi} There exists a constant $C$ such that $|\Delta \psi(x)|\le \frac C{|x|^{N+2}}$ in $\Omega$.
\end{coro}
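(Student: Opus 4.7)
The plan is to exploit that $\psi:=1-\phi$ is $L$-harmonic in $\Omega$—indeed, $L(1)\equiv 0$ and $L\phi=0$ in $\Omega$, so $L\psi=0$ there—and then Taylor-expand $\psi$ to fourth order inside the integral representation of $L\psi(x)$, so as to isolate $\alpha\Delta\psi(x)$ as the principal term. The remainder, which involves fourth-order derivatives of $\psi$, will be controlled by estimate~\eqref{derivative-psi} with $s=4$.

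More concretely, choose $R_0$ large enough that $B(x,1)\subset\Omega$ whenever $|x|\ge R_0$; this is possible since $\mathcal H$ is bounded. For such $x$, $\psi$ is $C^4$ on $B(x,1)$ and
\begin{equation*}
\psi(y)-\psi(x)=\sum_{1\le|\gamma|\le 3}\frac{D^\gamma\psi(x)}{\gamma!}(y-x)^\gamma+\sum_{|\gamma|=4}\frac{D^\gamma\psi(\xi_y)}{\gamma!}(y-x)^\gamma
\end{equation*}
for some $\xi_y$ on the segment joining $x$ to $y$. Because $J$ is radial and supported in the unit ball, the odd moments of $J$ vanish and $\int J(z)z_iz_j\,dz=2\alpha\,\delta_{ij}$. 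Integrating against $J(x-y)$ therefore annihilates the first- and third-order contributions, while the second-order one produces exactly $\alpha\Delta\psi(x)$. Using $L\psi(x)=0$ we end up with
\begin{equation*}
\alpha\Delta\psi(x)=-\sum_{|\gamma|=4}\frac{1}{\gamma!}\int_{\R^N}J(x-y)\,D^\gamma\psi(\xi_y)\,(y-x)^\gamma\,dy.
\end{equation*}
Since $|\xi_y-x|\le 1$ and $|x|\ge R_0\ge 2$ imply $|\xi_y|\ge|x|/2$, \eqref{derivative-psi} with $s=4$ yields $|D^\gamma\psi(\xi_y)|\le C|x|^{-(N+2)}$. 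Combined with $|(y-x)^\gamma|\le 1$ and $\int J=1$, this gives $|\Delta\psi(x)|\le C|x|^{-(N+2)}$ for $|x|\ge R_0$.

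For the bounded range $\{x\in\Omega:|x|\le R_0\}$: since $0\in\mathcal H$ is open, $|x|$ stays bounded away from $0$ there, and $\phi$ (hence $\Delta\psi=-\Delta\phi$) is smooth in $\Omega$, so $\Delta\psi$ is uniformly bounded on this compact set; enlarging $C$ if necessary then gives the desired inequality on all of $\Omega$. The main—although essentially routine—obstacle is just the Taylor bookkeeping, the key point being that the odd moments of $J$ vanish by radial symmetry, so the first derivative that actually drives the remainder is the fourth, whose decay $O(|x|^{-(N+2)})$ is supplied by the previous proposition and matches exactly the bound claimed.
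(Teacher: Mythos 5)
Your argument is exactly the paper's: Taylor-expand $\psi$ to fourth order in $L\psi(x)=0$, use the radial symmetry of $J$ to kill the odd moments and extract $\alpha\Delta\psi(x)$, and bound the remainder by the fourth-order derivative estimate \eqref{derivative-psi}, which gives precisely the $O(|x|^{-(N+2)})$ decay. The write-up is correct; it simply spells out the bookkeeping that the paper leaves implicit.
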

\begin{proof}
 Using Taylor's expansion we get $L\psi(x)=\alpha\Delta\psi(x)+ O\big(\max_{|\beta|=4}\|D^\beta\psi\|_{L^\infty(B_1(x))}\big)$. Since $\psi$ is $L$-harmonic, the result follows from the estimate~\eqref{derivative-psi} for the fourth order derivatives.
\end{proof}

\begin{rem}
The proof of Corollary~\ref{Lpsi} only requires $J\in C^4$. If we weaken this assumption and only require $J\in C^3$, we get $|\Delta\psi(x)|\le C/|x|^{N+1}$, and the Laplacian of $\psi$ is still integrable at infinity.
\end{rem}

We can take profit of the integrability of $\Delta\psi$ at infinity to obtain the precise asymptotic behavior of $\psi$.

\begin{prop} There exists a constant $C^*$ such that for every multi-index $\beta\in\mathbb{N}_0^N$ of order $|\beta|=s$,
\begin{equation}\label{asym-psi}
|x|^{N-2+s}\Big|D^\beta\psi(x)-D^\beta \psi^*(x)\Big|\to0\quad\mbox{as}\quad|x|\to\infty,\qquad \psi^*(x)=C^*|x|^{2-N}.
\end{equation}
\end{prop}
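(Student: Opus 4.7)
The strategy is to identify $\psi$, away from $\mathcal{H}$, with the Newtonian potential of an integrable function, extract the leading $|x|^{2-N}$ term, and then pass to derivatives through a scaling--compactness argument based on the bounds already in hand. First, to regularize across $\partial\mathcal{H}$, fix $R_0$ with $\mathcal{H}\subset B(0;R_0)$ and a smooth radial cutoff $\chi$ which is $0$ on $B(0;R_0)$ and $1$ outside $B(0;2R_0)$, and set $\tilde\psi=\chi\psi$. Since $\psi$ is smooth in $\Omega$, $\tilde\psi\in C^\infty(\R^N)$, equals $\psi$ for $|x|\ge 2R_0$ and tends to $0$ at infinity. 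The function $f:=\Delta\tilde\psi$ is bounded on the transition annulus (product rule), while for $|x|\ge 2R_0$ it coincides with $\Delta\psi$ and thus, by Corollary~\ref{Lpsi}, satisfies $|f(x)|\le C|x|^{-(N+2)}$. In particular $f\in L^1(\R^N)\cap L^\infty(\R^N)$ with a finite first moment. With $\Gamma_0(x)=((N-2)\omega_{N-1})^{-1}|x|^{2-N}$ the Newton kernel ($\Delta(-\Gamma_0*f)=f$), a splitting at $|y|=|x|/2$ shows $-\Gamma_0*f$ vanishes at infinity, so $\tilde\psi-(-\Gamma_0*f)$ is a bounded harmonic function on $\R^N$ tending to $0$ at infinity and Liouville's theorem yields $\tilde\psi=-\Gamma_0*f$.

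I next extract the leading behavior. For $|x|$ large, decompose
\[
-\Gamma_0*f(x)=-\Gamma_0(x)\!\!\int_{|y|<|x|/2}\!\!\!\!\!f\,dy-\!\!\int_{|y|<|x|/2}\!\!\!\!\!(\Gamma_0(x-y)-\Gamma_0(x))f(y)\,dy-\!\!\int_{|y|\ge|x|/2}\!\!\!\!\!\Gamma_0(x-y)f(y)\,dy.
\]
The first piece equals $-\Gamma_0(x)\int_{\R^N}f+o(|x|^{2-N})$, since the discarded tail mass is $o(1)$. For the second, the mean value theorem gives $|\Gamma_0(x-y)-\Gamma_0(x)|\le C|y||x|^{1-N}$ for $|y|<|x|/2$, and the finite first moment produces an $O(|x|^{1-N})=o(|x|^{2-N})$ contribution. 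For the third, splitting further into $|x-y|\gtrless|x|/4$ and using $|f(y)|\le C|y|^{-(N+2)}$ yields $O(|x|^{-N})$. Setting $C^*=-((N-2)\omega_{N-1})^{-1}\int_{\R^N}f$ and recalling $\psi=\tilde\psi$ for $|x|$ large, this proves \eqref{asym-psi} in the case $s=0$. The delicate point---where I expect the main obstacle---is precisely this expansion: the decay $|f|\le C|y|^{-(N+2)}$ is borderline (it just provides a finite first moment), so the $o(|x|^{2-N})$ remainder must be extracted carefully from the near/far splitting just described.

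For $s\ge 1$ I avoid differentiating the integral representation (whose higher-order derivatives are singular) and instead use the scaling $\psi^k(x):=k^{N-2}\psi(kx)$. The estimate \eqref{derivative-psi} is invariant under this rescaling, so on any compact $K\subset\R^N\setminus\{0\}$ and for $k$ large enough that $kK\subset\Omega$, all $D^\beta\psi^k$ are uniformly bounded on $K$. Corollary~\ref{Lpsi} rescales to $|\Delta\psi^k(x)|\le Ck^{-2}|x|^{-(N+2)}$, so $\Delta\psi^k\to 0$ locally uniformly on $\R^N\setminus\{0\}$. Arzel\`a--Ascoli with a diagonal extraction then produces, along any sequence $k_n\to\infty$, a subsequence with $\psi^{k_n}\to\psi^\infty$ in $C^\infty_{\mathrm{loc}}(\R^N\setminus\{0\})$, where $\psi^\infty$ is harmonic on $\R^N\setminus\{0\}$ and satisfies $|\psi^\infty(x)|\le K|x|^{2-N}$. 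The spherical-harmonic classification of such functions forces $\psi^\infty(x)=A|x|^{2-N}$, and the $s=0$ case already proved gives $\psi^k(x)\to C^*|x|^{2-N}$ pointwise for every $x\ne 0$, so $A=C^*$ independently of the subsequence. The whole family therefore converges in $C^\infty_{\mathrm{loc}}(\R^N\setminus\{0\})$, and translating this convergence back via $y=kx$ yields \eqref{asym-psi} for every multi-index $\beta$.
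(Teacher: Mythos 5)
Your argument is correct, and it reaches the conclusion by a genuinely different route from the paper's. The paper runs the scaling--compactness argument for all orders $s$ simultaneously: it identifies the blow-down limit as harmonic by testing against $C^\infty_{\mathrm c}$ functions, using $L_k\psi^k=0$ together with the uniform convergence $L_k\varphi\to\alpha\Delta\varphi$, and it then shows the constant is independent of the subsequence through a flux argument --- the integral $\int_{\partial B(0;R)}\partial_r\psi\,dS$ has a limit $\mu$ as $R\to\infty$ precisely because $\Delta\psi$ is integrable at infinity (Corollary~\ref{Lpsi}), and $C^*=\mu\,\big((2-N)\,\mathcal{H}^{N-1}(\partial B(0;1))\big)^{-1}$. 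You instead settle the case $s=0$ first and explicitly, by writing the cut-off function $\tilde\psi=\chi\psi$ as the Newtonian potential of $f=\Delta\tilde\psi$ (via Liouville) and expanding that potential at infinity; the case $s\ge1$ is then the same scaling--compactness step as in the paper, except that harmonicity of the limit comes from the pointwise rescaled bound $|\Delta\psi^k(x)|\le Ck^{-2}|x|^{-(N+2)}$ rather than from the weak formulation, and the constant is pinned down by matching against the already-proved $s=0$ asymptotics rather than by the flux integral. Both proofs ultimately rest on Corollary~\ref{Lpsi} and on the derivative bounds \eqref{derivative-psi}. What your version buys is an explicit formula $C^*=-\big((N-2)\,\mathcal{H}^{N-1}(\partial B(0;1))\big)^{-1}\int_{\mathbb{R}^N}\Delta\tilde\psi$ --- which agrees with the paper's $\mu$ by the divergence theorem --- together with a quantitative remainder $O(|x|^{1-N})$ in the case $s=0$. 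What it costs is that your finite-first-moment step uses the full strength of $|\Delta\psi|\le C|x|^{-(N+2)}$ (hence $J\in C^4$), whereas the paper's flux argument only needs $\Delta\psi$ integrable at infinity; under the paper's standing smoothness assumption on $J$ this is immaterial. One small remark: the decay $|f(y)|\le C|y|^{-(N+2)}$ is not actually borderline for the first moment --- it leaves a full power to spare --- so the near/far expansion you flag as the delicate point goes through with room, and even the weaker $C^3$ bound $|\Delta\psi|\le C|x|^{-(N+1)}$ would only introduce a harmless logarithm there.
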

\begin{proof}\textsf{Scaling and compactness. }
From~\eqref{derivative-psi} we get that the scaled functions
$\psi^k$ satisfy $|D^\beta\psi^k(x)|\le C/|x|^{N-2+s}$ if
$|\beta|=s$. Hence, thanks to Arzel\`{a}-Ascoli's theorem, for any
sequence $\{k_n\}_{n\in\mathbb{N}}$ there exist a subsequence, that
we name again $\{k_n\}_{n\in\mathbb{N}}$, and a function $\psi^*$
such that, for any multi-index $\beta$,
\begin{equation}
\label{eq:convergence.derivatives}
D^\beta \psi^{k_{n}}\to
D^\beta\psi^*\quad\text{ uniformly on compact sets of }\R^N\setminus\{0\}.
\end{equation}
Moreover, $\psi^*$ inherits from the functions $\psi^k$ the estimate
\begin{equation}
\label{eq:estimate.bar.psi}
0\le \psi^*(x)\le C/|x|^{N-2}.
\end{equation}

\noindent\textsf{Identification of the limit along subsequences. }
We consider the scaled operator $L_k$ defined by
$L_kv=k^2\big(J_k*v-v\big)$. Performing a Taylor expansion, it is
easy to check that $L_k\varphi\to\alpha\Delta \varphi$ uniformly for
any $\varphi\in C^\infty_{\text{c}}(\mathbb{R}^N)$.

Let now $\varphi\in C^\infty_{\text{c}}(\mathbb{R}^N\setminus\{0\})$.
We write
\begin{equation}
\label{eq:convergence.to.harmonic}
\int_{\mathbb{R}^N}\psi^*\alpha\Delta\varphi=\int_{\mathbb{R}^N}\psi^kL_k\varphi+\int_{\mathbb{R}^N}(\psi^*-\psi^k)\alpha\Delta\varphi
+\int_{\mathbb{R}^N}\psi^k(\alpha\Delta\varphi-L_k\varphi).
\end{equation}
Since $L_k\psi^k=0$, we have
$\int_{\mathbb{R}^N}\psi^kL_k\varphi=\int_{\mathbb{R}^N}L_k\psi^k\varphi=0$.
Besides, using~\eqref{derivative-psi}, for $k$ large enough we have
$$
\left|\int_{\mathbb{R}^N}\psi^k(\alpha\Delta\varphi-L_k\varphi)\right|\le
C\int_{\mathbb{R}^N\cap\big(\mathop{\rm supp\,}
\varphi+B_{1/k}(0)\big)}|\alpha\Delta\varphi-L_k\varphi|\to0\quad\text{as
}k\to\infty.
$$
Therefore, taking $k=k_n$ in~\eqref{eq:convergence.to.harmonic} and
letting $n\to\infty$, we get
$\int_{\mathbb{R}^N}\psi^*\alpha\Delta\varphi=0$, which implies
that $\Delta\psi^*=0$ in $\R^N\setminus \{0\}$. Since $
|x|^{N-2}\psi^*$ is bounded, see estimate~\eqref{eq:estimate.bar.psi}, we conclude that
there exists a constant $C^*$ such
that $\psi^*(x)= C^*/|x|^{N-2}$.

\noindent\textsf{Uniqueness of the limit. }
We next prove that the constant $C^*$ is independent of the sequence $\{k_{n}\}_{n\in\mathbb{N}}$.

A direct computation shows that
\[
\int_{\partial B(0;R)}\frac{\partial\psi^*}{\partial r}\,dS=C^* (2-N)\, {\mathcal H}^{N-1}\big(\partial B(0;1)\big)\quad \text{for all }R>0.
\]
On the other hand, if $\mathcal{H}\subset\subset B_{R_0}$,
\[
\begin{aligned}
\int_{\partial B(0;R)}\frac{\partial\psi}{\partial r}\,dS=
\int_{\partial B(0;R_0)}\frac{\partial\psi}{\partial r}\,dS
+ \int_{B_R\setminus B_{R_0}}\Delta\psi(x)\,dx \quad\text{for all }R>R_0.
\end{aligned}
\]
Since $\int_{B_R\setminus B_{R_0}}\Delta\psi(x)\,dx$ has a finite limit as $R\to\infty$, see Corollary \ref{Lpsi},
$\int_{\partial B(0;R)}\frac{\partial\psi}{\partial r}\,dS$ also has a limit, that we denote by $\mu$.

Let $y=kx$. On one hand, we have  $|y|^{N-1}\nabla\psi(y)=|x|^{N-1}\nabla\psi^k(x)$ and on the other, $|y|^{N-1}\nabla \psi^*(y)=|x|^{N-1}\nabla \psi^*(x)$. Therefore, taking $k=k_n$ and $|x|=1$, which implies $|y|=k_{n}$, and using~\eqref{eq:convergence.derivatives}, we get
\[
|y|^{N-1}\big|\nabla\psi(y)-\nabla\psi^*(y)\big|=\big|
\nabla\psi^{k_{n}}(x)-\nabla\psi^*(x)\big|
<\ep\quad\text{if }|y|=k_{n}\text{ and }n\text{ is large}.
\]
Hence,
\[
\Big|\int_{\partial B(0;k_{n})}\frac{\partial\psi}{\partial r}\,dS-C^* (2-N)\,\H^{N-1}\big(\partial B(0;1)\big)\Big|=
\Big|\int_{\partial B(0;k_{n})}\left(\frac{\partial\psi}{\partial r}-\frac{\partial\psi^*}{\partial r}\right)\,dS\Big|
<C\ep
\]
if $n$ is large. We conclude that, $C^*=\mu\,\big((2-N)\,\H^{N-1}\big(\partial B(0;1)\big)\big)^{-1}$.

\noindent\textsf{Rephrasing the limit. } Take  $y=kx$ and $|x|=1$. In the above steps we have proved that
\[
|y|^{N-2+s}D^\beta\psi(y)=k^{N-2+s}D^\beta\psi(kx)=D^\beta\psi^k(x)\to D^\beta \psi^*(x)\quad\mbox{as}\quad |y|=k\to\infty.
\]
Since $D^\beta \psi^*(x)=|y|^{N-2+|\beta|}D^\beta\psi^*(y)$ (remember that $|x|=1$), this is just \eqref{asym-psi}.
\end{proof}

\begin{rem}
To get~\eqref{asym-psi} it is enough to have $J\in C^{\max(3,s)}$. We need at least $J\in C^3$, no matter the order of the derivative we are considering, in order for $\Delta\psi$ to be integrable at infinity, which is one of the  main ingredients of the proof.
\end{rem}

%%%%%%%%%%%%%%%%%%%%%%%%%%%%%%%%%%%%%%%%%%%%%%%%%%%%%%%%%5
\section{Conservation law and asymptotic mass}
\setcounter{equation}{0}

We start our study of the evolutionary problem \eqref{problem} with a conservation law that will be of the most importance in
order to establish the behavior of the solution
in the far field.

\begin{prop} Let $u$ be the solution to \eqref{problem} and $\phi$ the solution to \eqref{stationary}. Then,
\begin{equation}\label{ley-conservacion}
\int_{\mathbb{R}^N} u(x,t)\phi(x)\,dx=\int_{\mathbb{R}^N}
u_0(x)\phi(x)\,dx:=M^*\quad\mbox{for every }t>0.
\end{equation}
\end{prop}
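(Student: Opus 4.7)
The plan is the standard integration-by-parts trick for self-adjoint nonlocal operators, combined with the fact that the boundary condition $u=0$ in $\mathcal H$ exactly matches the boundary condition $\phi=0$ in $\mathcal H$.

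First, I would note that $u\phi\in L^1(\R^N)$ for every $t\ge 0$, since $u(\cdot,t)\in L^1(\R^N)$ and $0\le\phi\le 1$. Because $u\in C^1([0,\infty);L^1(\R^N))$ and $\phi\in L^\infty(\R^N)$, the map $t\mapsto \int_{\R^N} u(x,t)\phi(x)\,dx$ is $C^1$ with derivative $\int_{\R^N} \partial_t u(x,t)\phi(x)\,dx$. So it suffices to show this derivative vanishes for all $t>0$.

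Next I would use \eqref{problem1}, which holds a.e.\ in $\R^N\times(0,\infty)$: writing $\partial_t u = Lu - \chi_{\mathcal H}(J*u)$ and multiplying by $\phi$, the second term vanishes because $\phi\equiv 0$ in $\mathcal H$. Hence
\[
\int_{\R^N}\partial_t u(x,t)\,\phi(x)\,dx
=\int_{\R^N}\phi(x)\bigl((J*u)(x,t)-u(x,t)\bigr)\,dx.
\]
Since $J$ is symmetric and both $\phi(x)u(y)J(x-y)$ and $\phi(x)u(x)$ are integrable on $\R^N\times\R^N$ and $\R^N$ respectively (use $\phi\le 1$, $J\in L^1$, $u(\cdot,t)\in L^1$), Fubini's theorem and the substitution $y\leftrightarrow x$ in the convolution term give
\[
\int_{\R^N}\phi(x)\bigl((J*u)(x,t)-u(x,t)\bigr)\,dx
=\int_{\R^N}u(y,t)\bigl((J*\phi)(y)-\phi(y)\bigr)\,dy
=\int_{\R^N}u(y,t)\,L\phi(y)\,dy.
\]

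Finally I would split the last integral into $\Omega$ and $\mathcal H$. On $\Omega$, $L\phi\equiv 0$ by \eqref{stationary}; on $\mathcal H$, $u(\cdot,t)\equiv 0$. Both contributions vanish, so
\[
\frac{d}{dt}\int_{\R^N}u(x,t)\,\phi(x)\,dx=0\quad\text{for all }t>0,
\]
and integrating in time from $0$ yields \eqref{ley-conservacion}.

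The only subtle point is the justification of Fubini and of differentiation under the integral; this is routine here because $\phi\in L^\infty$ and all time-dependent objects $u,\partial_t u, J*u$ lie in $C([0,\infty);L^1(\R^N))$. No obstacle of substance remains.
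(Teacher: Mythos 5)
Your proof is correct and follows essentially the same route as the paper's: differentiate in time, use the symmetry of $J$ together with Fubini's theorem to transfer the operator onto $\phi$, and conclude from $L\phi=0$ in $\Omega$ and $u=0$ in $\mathcal H$. The only difference is that you spell out the integrability and differentiation-under-the-integral justifications, which the paper leaves implicit.
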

\begin{proof} Since $u(x,t)=\phi(x)=0$ for $x\in \mathcal{H}$, we have
$$
\begin{aligned}
\frac{d}{dt}\int_{\mathbb{R}^N}
u(x,t)\phi(x)\,dx&=\int_{\mathbb{R}^N} u_t(x,t)\phi(x)\,dx\\&=
\int_{\mathbb{R}^N}\int_{\mathbb{R}^N}
J(x-y)\big(u(y,t)-u(x,t)\big)\phi(x)\,dy\,dx\\&=
\int_{\mathbb{R}^N}\int_{\mathbb{R}^N}
J(x-y)\big(\phi(y)-\phi(x)\big)u(x,t)\,dy\,dx\\&=0.
\end{aligned}
$$
\end{proof}

Though mass is not conserved, there is a non-trivial asymptotic
mass, which coincides with the conserved quantity $M^*$.
\begin{prop}\label{mass} Let $u$ be the solution to \eqref{problem} and $M^*=\int_{\mathbb{R}^N} u_0(x)\phi(x)\,dx$.
The mass of the solution at time $t$,
$M(t)=\displaystyle\int_{\mathbb{R}^N}u(x,t)\, dx$, satisfies $
M(t)\to M^*$ as $t\to\infty$.
\end{prop}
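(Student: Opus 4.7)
The plan is to reduce the statement to a decay claim about the remainder $\int u(x,t)\bigl(1-\phi(x)\bigr)\,dx$, and then to control that integral by splitting space into a ball and its complement, exploiting respectively the $L^\infty$ decay $u\le Ct^{-N/2}$ from \eqref{decaimiento} and the spatial decay $\psi:=1-\phi\le C(|x|^2+1)^{-(N-2)/2}$ from \eqref{cotas-fi}, together with the uniform $L^1$ bound $\|u(\cdot,t)\|_{L^1}\le\|u_0\|_{L^1}$.

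More precisely, I would proceed as follows. First, using the conservation law~\eqref{ley-conservacion} proven just above, write
\[
M(t)-M^* \;=\; \int_{\mathbb{R}^N} u(x,t)\bigl(1-\phi(x)\bigr)\,dx.
\]
Since $u(\cdot,t)=0$ a.e.~in $\mathcal{H}$, the integrand vanishes on $\mathcal{H}$, so the domain of integration reduces effectively to $\Omega$, where $1-\phi=\psi$. Thus it suffices to show that $\int_{\Omega} u(x,t)\psi(x)\,dx\to 0$ as $t\to\infty$.

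Next, fix $\varepsilon>0$ and split the integral at a radius $R$ to be chosen. On the outer region $\{|x|>R\}\cap\Omega$, I use~\eqref{cotas-fi} to estimate
\[
\int_{\{|x|>R\}\cap\Omega} u(x,t)\psi(x)\,dx \;\le\; \frac{K}{(R^2+1)^{(N-2)/2}}\,\|u(\cdot,t)\|_{L^1(\mathbb{R}^N)}\;\le\;\frac{K\|u_0\|_{L^1}}{(R^2+1)^{(N-2)/2}},
\]
and since $N\ge 3$ this can be made smaller than $\varepsilon/2$ by choosing $R$ large enough (independently of $t$). On the inner region $\{|x|\le R\}$, using $\psi\le 1$ and the decay~\eqref{decaimiento},
\[
\int_{\{|x|\le R\}} u(x,t)\psi(x)\,dx \;\le\; C\,t^{-N/2}\,|B(0;R)| \;=\; C_N\,R^N\,t^{-N/2},
\]
which, with $R$ already fixed, can be made smaller than $\varepsilon/2$ for all $t$ large enough.

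This whole argument is essentially routine; there is no real obstacle since all the necessary ingredients (the conservation law, the decay of $\psi$ at infinity, the $L^\infty$ decay rate, and the $L^1$ contraction) have already been established. The only mildly delicate point is the order of choices ($R$ first depending on $\varepsilon$ and $\|u_0\|_{L^1}$, then $T$ depending on $R$ and $\varepsilon$), but this is standard. No finer information such as the matched-asymptotic profile is needed here; the crude pointwise and $L^1$ bounds together with the integrability of $\psi$ at infinity (which follows from $N\ge 3$) are exactly enough.
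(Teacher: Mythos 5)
Your proof is correct and follows essentially the same route as the paper's: reduce to $\int u(x,t)\bigl(1-\phi(x)\bigr)\,dx$ via the conservation law, split space into a near and a far region, and use the $L^\infty$ decay $u\le Ct^{-N/2}$ on the near part and the spatial decay of $1-\phi$ together with the uniform $L^1$ bound on the far part. The only immaterial difference is that you split at a fixed radius $R$ chosen after $\varepsilon$, whereas the paper splits at the self-similar radius $\eta\sqrt{t}$ and lets $\eta\to0$ after taking $\limsup_{t\to\infty}$; both orderings of the choices work.
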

\begin{proof}
In fact,
\[\begin{aligned}
\left|\int_{\mathbb{R}^N} u(x,t)\,dx-M^*\right|&=\int_{\mathbb{R}^N} u(x,t)\big(1-\phi(x)\big)\,dx\\
&\le \int_{|x|<\eta\sqrt t}u(x,t)\,dx+\int_{|x|\ge\eta\sqrt t}u(x,t)\big(1-\phi(x)\big)\,dx\\
&\le C t^{-N/2}(\eta\sqrt t)^N+C\frac1{(\eta\sqrt t)^{N-2}}\int_{\mathbb{R}^N} u(x,t)\,dx\\
&\le C\eta^N+C\|u_0\|_{L^1(\mathbb{R}^N)}\frac1{(\eta\sqrt t)^{N-2}}.
\end{aligned}
\]
Hence, choosing $\eta$ so that $C\eta^N<\ep$ and taking
$\limsup_{t\to\infty}$ we get,
\[
\limsup_{t\to\infty}\left|\int_{\mathbb{R}^N}
u(x,t)\,dx-M^*\right|\le \ep.
\]
We conclude by letting $\ep\to0$.
\end{proof}

%%%%%%%%%%%%%%%%%%%%%%%%%%%%%%%%%%%%%%%%%%%%%%%%%%%%%%%%%5
\section{Far-field limit}
\setcounter{equation}{0}

In this section we study the large time behavior of the solution
to~\eqref{problem} in sets of the form $\{|x|^2\ge \delta {t}\}$,
with $\delta>0$.
\begin{teo}\label{teo1} Let $u$ be the solution of \eqref{problem} and $M^*$ the asymptotic mass given in
\eqref{ley-conservacion}.
Then, for every $\delta>0$,
\begin{equation}
\label{estimate-1}
t^{N/2}\|u(x,t)-M^*\Gamma_\alpha(x,t)\|_{L^\infty(\{|x|^2\ge\delta
t\})}\to0\quad\mbox{as}\quad t\to\infty.
\end{equation}
\end{teo}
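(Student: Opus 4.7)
The plan is to fix a large initial time $t_0$ and use the representation formula~\eqref{eq:representation.formula} to decompose $u$ as $u=u_\infty-R$, where $u_\infty$ is the value of the hole-free Cauchy evolution starting from $u(\cdot,t_0)$ and $R$ is a correction involving an integral over $\H$. The Cauchy asymptotic result combined with Proposition~\ref{mass} ($M(t_0)\to M^*$) handles $u_\infty$; the remainder $R$ is controlled in the far field by splitting the time integration at $s=t/2$ and applying the two complementary bounds \eqref{decaimiento-W} and \eqref{barriers} on $\omega$.

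Concretely, on $\{|x|^2\ge\delta t\}$ the point $x$ lies in $\Omega$ for $t$ large, so $\chi_\H(x)=0$ and~\eqref{eq:representation.formula} reads $u=u_\infty-R$ with
\[
u_\infty(x,t):=e^{-(t-t_0)}u(x,t_0)+\int_{\R^N}\omega(x-y,t-t_0)u(y,t_0)\,dy,
\]
\[
R(x,t):=\int_{t_0}^t\!\!\int_\H\omega(x-y,t-s)(J*u)(y,s)\,dy\,ds.
\]
Since $u(\cdot,t_0)\in L^1(\R^N)\cap L^\infty(\R^N)$ has mass $M(t_0)$, the Remark after~\eqref{decaimiento} combined with the standard heat-equation asymptotics for $v$ gives $t^{N/2}\|u_\infty(\cdot,t)-M(t_0)\Gamma_\alpha(\cdot,t-t_0)\|_{L^\infty(\R^N)}\to 0$. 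A direct computation with the self-similar form of $\Gamma_\alpha$ yields $t^{N/2}\|\Gamma_\alpha(\cdot,t-t_0)-\Gamma_\alpha(\cdot,t)\|_{L^\infty(\R^N)}\to 0$ and $t^{N/2}\|\Gamma_\alpha(\cdot,t)\|_{L^\infty}=(4\pi\alpha)^{-N/2}$, so
\[
\limsup_{t\to\infty} t^{N/2}\|u_\infty(\cdot,t)-M^*\Gamma_\alpha(\cdot,t)\|_{L^\infty(\R^N)}\le C\,|M(t_0)-M^*|,
\]
which is small for $t_0$ large by Proposition~\ref{mass}.

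For $R=R_1+R_2$ split at $s=t/2$, use $M'(s)=-\int_\H(J*u)(y,s)\,dy$, whence $\int_{t_0}^\infty\int_\H(J*u)\,dy\,ds=M(t_0)-M^*$. For $R_1$, the uniform bound $|\omega(\cdot,\tau)|\le C\tau^{-N/2}$ from~\eqref{decaimiento-W} with $\tau\ge t/2$ yields
\[
\|R_1(\cdot,t)\|_{L^\infty(\R^N)}\le Ct^{-N/2}\bigl(M(t_0)-M^*\bigr).
\]
For $R_2$, since $\H$ is bounded, for $|x|^2\ge\delta t$ and $t$ large we have $|x-y|\ge|x|/2\ge\sqrt{\delta t}/2$ for all $y\in\H$; combining the spatial bound $|\omega(z,\tau)|\le C\tau/|z|^{N+2}$ from~\eqref{barriers} with $\|(J*u)(\cdot,s)\|_{L^\infty}\le \|u(\cdot,s)\|_{L^\infty}\le Cs^{-N/2}\le Ct^{-N/2}$ on $s\in[t/2,t]$ (by~\eqref{decaimiento}),
\[
\sup_{|x|^2\ge\delta t}|R_2(x,t)|\le \frac{C\,t^{-N/2}\,|\H|}{(\delta t)^{(N+2)/2}}\int_{t/2}^t(t-s)\,ds\le C\,t^{1-N},
\]
so $t^{N/2}\sup_{|x|^2\ge\delta t}|R_2(x,t)|\le Ct^{1-N/2}\to 0$ since $N\ge3$. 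Given $\ep>0$, choose $t_0$ so that $C(M(t_0)-M^*)<\ep$; the three estimates then combine to produce $\limsup_{t\to\infty}t^{N/2}\|u(\cdot,t)-M^*\Gamma_\alpha(\cdot,t)\|_{L^\infty(\{|x|^2\ge\delta t\})}\le 2\ep$, and letting $\ep\to 0$ finishes the argument. The main obstacle, resolved by the $t/2$-splitting, is that neither of the two bounds for $\omega$ alone gives a uniformly good estimate on $R$ across the full interval $s\in[t_0,t]$.
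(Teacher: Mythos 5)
Your argument is correct and follows essentially the same route as the paper's: the representation formula \eqref{eq:representation.formula}, the Cauchy asymptotics together with Proposition~\ref{mass} for the main term, and the bounds \eqref{barriers}, \eqref{decaimiento} for the term coming from the hole. The only deviation is in the remainder $R$ (the paper's $I_4$): the paper estimates the whole time integral in one stroke, using $|x-y|\ge |x|/2$ and $\int_{t_0}^t (t-s)s^{-N/2}\,ds\le C\,t\,t_0^{1-N/2}$ to get a bound $C\delta^{-(N+2)/2}t_0^{1-N/2}$ that is small for $t_0$ large, so your $t/2$-splitting and the mass-loss identity $\int_{t_0}^{\infty}\int_{\H}(J*u)=M(t_0)-M^*$, while perfectly valid, are not actually needed.
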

\begin{proof} Let $t_0>0$ and $t\ge 2t_0$. From the representation formula \eqref{eq:representation.formula} we have
\[
\begin{aligned}
t^{N/2}|u(x,t)- M^*\Gamma_\alpha(x,t)|&\le \underbrace{t^{N/2}e^{-(t-t_0)}u(x,t_0)}_{I_1}\\
&\ +\underbrace{t^{N/2}\left|\int_{\mathbb{R}^N} \omega(x-y,t-t_0)u(y,t_0)\,dy-M^*\Gamma_\alpha(x,t)\right|}_{I_2}\\
&\ +\underbrace{t^{N/2}\int_{t_0}^te^{-(t-s)}\X_\mathcal{H}(x)(J*u(\cdot,s))(x)\,ds}_{I_3}\\
&\ +\underbrace{t^{N/2}\int_{t_0}^t\int_{\mathbb{R}^N} \omega(x-y,t-s)\X_\mathcal{H}(y)(J*u(\cdot,s))(y)\,dy\,ds}_{I_4}.
\end{aligned}
\]
Let $\delta>0$ and $|x|^2\ge \delta t$. We have the following estimates for the quantities $I_i$:

\noindent $\blacktriangleright$ $
I_1\le C t^{N/2}e^{-t/2}\to 0$ as $t\to\infty$.

\noindent $\blacktriangleright$
The estimate for $I_2$ is the most involved, and is split in three parts,
 \[
\begin{aligned}
I_2&\le \underbrace{t^{N/2}\int_{\mathbb{R}^N}\left| \omega(x-y,t-t_0)-\Gamma_\alpha(x-y,t-t_0)\right|u(y,t_0)\,dy}_{I_{2,1}}\\
&\ +\underbrace{t^{N/2}\left|\int_{\mathbb{R}^N}\Gamma_\alpha(x-y,t-t_0)u(y,t_0)\,dy-M(t_0)\Gamma_\alpha(x,t)\right|}_{I_{2,2}}\\
&\ +\underbrace{t^{N/2}\Gamma_\alpha(x,t)|M(t_0)-M^*|}_{I_{2,3}}.
\end{aligned}
\]
\begin{itemize}
\item Using \eqref{estima-W},
$$
\begin{aligned}
I_{2,1}&\le t^{N/2}\|u(\cdot,t_0)\|_{L^1(\mathbb{R}^N)}\|\omega(\cdot,t-t_0)-\Gamma_\alpha(\cdot,t-t_0)\|_{L^\infty(\mathbb{R}^N)}\\
&= t^{N/2}\|u_0\|_{L^1(\mathbb{R}^N)}o\big((t-t_0)^{-\frac{N}2}\big).
\end{aligned}
$$
Thus, $\lim_{t\to\infty}I_{2,1}=0$.

\item  The well-known asymptotics for (local) caloric functions imply $\lim_{t\to\infty} I_{2,2}=0$.

\item By Proposition \ref{mass},  if $t_0$ is large enough,
$I_{2,3}\le  C_\alpha|M(t_0)-M^*|<\ep$.

\end{itemize}

\noindent $\blacktriangleright$ For $t_0$ large enough,  $x\notin \mathcal H$. Hence, $I_3=0$.

\noindent $\blacktriangleright$ If $t_0$ is large enough and $y\in \mathcal{H}$, $|x-y|\ge|x|/2$. On the other hand, by \eqref{decaimiento}
$0\le u(x,s)\le C s^{-N/2}$ and therefore, $0\le (J*u(\cdot,s))(x)\le C s^{-N/2}$.
Thus, using the space-time bound \eqref{barriers} for $\omega$,
$$
I_4\le C t^{N/2}\int_{t_0}^t\int_\mathcal{H}\frac{t-s}{|x-y|^{N+2}}\,s^{-N/2}\,dy\,ds
\le C\Big(\frac{t}{|x|^2}\Big)^{\frac N2+1} t_0^{-\frac N2+1}\le C\delta^{-\frac N2-1}\,t_0^{-\frac N2+1}<\ep
$$
if $t_0$ is large enough.

Gathering all the above estimates, we get
\[
\limsup_{t\to\infty}\big\|t^{N/2}u(x,t)-t^{N/2}M^*\Gamma_\alpha(x,t)
\big\|_{L^\infty(|x|^2\ge\delta t)}\le2\ep.
\]
We finally let $\ep\to0$.
\end{proof}

As a corollary of the outer behavior, we  can do better in the
proof of Proposition~\ref{mass} to obtain the rate of decay of the
mass to its asymptotic limit.
\begin{coro}\label{cor.masa}
Under the assumptions of Proposition~\ref{mass}, we have
\begin{equation}
M(t)=M^* + K\,t^{-\frac{N-2}2} + o(t^{-\frac{N-2}2}),
\end{equation}
where $K=C^*M^*\int_{\mathbb{R}^N}U_{\alpha}(\xi)|\xi|^{2-N}\,d\xi,$
$\displaystyle C^*=\lim_{|x|\to\infty}(1-\phi(x))|x|^{N-2}$.
\end{coro}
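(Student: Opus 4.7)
The starting point is to observe that the proof of Proposition~\ref{mass} already hides the desired identity: combining the conservation law $\int u\phi\,dx=M^*$ with the definition of $M(t)$ gives
\[
M(t) - M^* = \int_{\mathbb{R}^N} u(x,t)\psi(x)\,dx,\qquad \psi := 1-\phi,
\]
and the plan is to compute the leading order of this integral by the parabolic rescaling $x = y\sqrt t$. Setting
\[
t^{(N-2)/2}\bigl(M(t)-M^*\bigr) = \int_{\mathbb{R}^N} f_t(y)\,dy,\qquad
f_t(y) := \bigl[t^{N/2}u(y\sqrt t,t)\bigr]\bigl[t^{(N-2)/2}\psi(y\sqrt t)\bigr],
\]
the task reduces to passing to the limit inside the integral.

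For the pointwise limit, fix $y\ne 0$. Theorem~\ref{teo1} (applied with $\delta=|y|^2/2$) yields $t^{N/2}u(y\sqrt t,t)\to M^*U_\alpha(y)$, while the asymptotic formula~\eqref{asym-psi} (with $s=0$) gives $|x|^{N-2}\psi(x)\to C^*$ as $|x|\to\infty$, so that $t^{(N-2)/2}\psi(y\sqrt t)\to C^*|y|^{2-N}$. Hence $f_t(y)\to M^*C^* U_\alpha(y)|y|^{2-N}$ for a.e.\ $y$, which is precisely the integrand producing the constant $K$ of the statement.

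The obstacle, and the only step requiring real work, is finding an integrable majorant. From~\eqref{decaimiento} we have $t^{N/2}u(y\sqrt t,t)\le C$, and~\eqref{cotas-fi} gives
\[
t^{(N-2)/2}\psi(y\sqrt t)\le t^{(N-2)/2}\cdot K\bigl(t|y|^2+1\bigr)^{-(N-2)/2}\le K|y|^{2-N},
\]
so $f_t(y)\le CK|y|^{2-N}$. This bound is integrable near the origin (since $N\ge 3$ implies $\int_{|y|\le R}|y|^{2-N}dy<\infty$) but \emph{not} at infinity, so dominated convergence on the whole space fails. The fix is to split at a ball of radius $R$: on $\{|y|\le R\}$ dominated convergence applies, giving $\int_{|y|\le R}f_t \to M^*C^*\int_{|y|\le R}U_\alpha|y|^{2-N}\,dy$, while the tail is controlled by undoing the rescaling,
\[
\int_{|y|>R} f_t(y)\,dy = t^{(N-2)/2}\int_{|x|>R\sqrt t} u(x,t)\psi(x)\,dx
\le \frac{K\|u_0\|_{L^1(\mathbb{R}^N)}}{R^{N-2}},
\]
uniformly in $t$ (once $R\sqrt t\ge 1$), using once more~\eqref{cotas-fi} together with the $L^1$ bound $\|u(\cdot,t)\|_{L^1}\le\|u_0\|_{L^1}$. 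The tail of the Gaussian integral $\int_{|y|>R}U_\alpha(y)|y|^{2-N}\,dy$ is handled directly.

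Putting the three estimates together with a standard $\varepsilon$/$R$ argument (first send $t\to\infty$ on $\{|y|\le R\}$, then $R\to\infty$), we conclude
\[
\lim_{t\to\infty} t^{(N-2)/2}\bigl(M(t)-M^*\bigr) = M^*C^*\int_{\mathbb{R}^N}U_\alpha(y)|y|^{2-N}\,dy = K,
\]
which is exactly the desired expansion.
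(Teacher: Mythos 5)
Your proof is correct, and it uses exactly the same ingredients as the paper's: the conservation law \eqref{ley-conservacion} to write $M(t)-M^*=\int u\psi$ with $\psi=1-\phi$, the parabolic rescaling $x=\xi t^{1/2}$, the uniform bounds \eqref{decaimiento} and \eqref{cotas-fi}, and the pointwise limit coming from Theorem~\ref{teo1} combined with the asymptotics of $\psi$. The only genuine difference is where the cut is placed. The paper splits at $|x|=\delta t^{1/2}$ and lets $\delta\to0$: the inner piece is discarded via the crude bound $I_1\le C\delta^2$ (which is the local integrability of $|\xi|^{2-N}$ for $N\ge3$ in disguise), and the limit is taken in the outer piece $\{|\xi|\ge\delta\}$, where the interchange of limit and integral is asserted without further comment. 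You split at $|x|=Rt^{1/2}$ and let $R\to\infty$: dominated convergence with the majorant $C|\xi|^{2-N}$ handles the bounded piece, origin included, while the tail is controlled uniformly in $t$ by $KR^{2-N}\|u_0\|_{L^1(\mathbb{R}^N)}$. The two decompositions are complementary, and yours has the small advantage of making explicit the tail estimate that legitimizes passing to the limit under the integral sign at infinity --- exactly the point the paper's argument leaves implicit. Either route yields $\lim_{t\to\infty}t^{(N-2)/2}\bigl(M(t)-M^*\bigr)=K$, which is the stated expansion.
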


\begin{proof}
We have from the conservation law~\eqref{ley-conservacion} that
$$
t^{\frac{N-2}2}(M(t)-M^*)=t^{\frac{N-2}2}\int_{\mathbb{R}^N}u(x,t)(1-\phi(x))\,dx.
$$
In order to estimate this last integral we split it into two parts,
$$
\begin{aligned}
\displaystyle t^{\frac{N-2}2}&\int_{\mathbb{R}^N}
u(x,t)(1-\phi(x))\, dx=
\\ &\displaystyle \underbrace{ t^{\frac{N-2}2}\int_{\{|x|\le
\delta t^{1/2}\}} u(x,t)(1-\phi(x))\, dx}_{ I_1}
+\underbrace{t^{\frac{N-2}2}\int_{\{|x|\ge \delta t^{1/2}\}}
u(x,t)(1-\phi(x))\, dx}_{I_2}
\end{aligned}
$$
for $\delta>0$ and $t$ big enough. Using the estimate
$0\le1-\phi(x)\le c_1|x|^{2-N}$ and the bound $0\le u(x,t)\le C t^{-N/2}$ we get $0\le I_1\le C\delta^2$. For $I_2$, we perform the
change of variables $x=\xi t^{1/2}$, and get
$$\begin{aligned}
\lim_{t\to\infty} I_2&=\int_{\{|\xi|\ge \delta \}} \lim_{t\to\infty}
\frac{|\xi|^{N-2}\left(1-\phi(\xi
t^{1/2})\right)t^{\frac{N-2}2}}{|\xi|^{N-2}}u(\xi
t^{1/2},t)t^{N/2}\, d\xi\\&=C^*M^*\int_{\{|\xi|\ge \delta \}}
U_{\alpha}(\xi)|\xi|^{2-N}\,d\xi.
\end{aligned}
$$
Hence
$$
\begin{aligned}
C^*M^*&\int_{\{|\xi|\ge \delta \}}
U_{\alpha}(\xi)|\xi|^{2-N}\,d\xi\le\liminf_{t\to\infty}t^{\frac{N-2}2}(M(t)-M^*)
\\
&\le \limsup_{t\to\infty} t^{\frac{N-2}2}(M(t)-M^*)\le
C\delta^2+C_*M^*\int_{\{|\xi|\ge \delta \}}
U_{\alpha}(\xi)|\xi|^{2-N}\,d\xi,
\end{aligned}
$$
from where the result follows just by letting $\delta\to 0$.
\end{proof}

\begin{rem} The amount of mass, $M_L(u)$, lost in the
evolution is given by
$$
M_L(u):= \int_{\mathbb{R}^N} u_0(x)\, dx - \lim_{t\to\infty}
\int_{\mathbb{R}^N} u(x,t)\,dx=  \int_{\mathbb{R}^N}
(1-\phi(x))u_0(x)\, dx>0.
$$
Therefore, the influence of the hole structure is felt at the
asymptotic level through the projection of the initial data onto
$\psi:=1-\phi$, which represents in this way the dissipation
capacity of $\mathcal{H}$.

Notice that $\psi$ is the $L$-harmonic function defined in $\Omega$
that takes value 1 on $\mathcal{H}$ and 0 at infinity. Hence, by analogy with the standard (local) potential theory, we may say that $\psi$ is the function that  measures the $L$-capacity of $\mathcal{H}$, by means of the
formula
\begin{equation*}
\label{capacidad}
 \mathop{\rm cap }\nolimits(\mathcal{H})=\inf_{\{u\ge1\text{ on }\H\}}\int_{\Omega}\int_\Omega J(x-y)(u(x)-u(y))^2\,dx\,dy.
\end{equation*}
\end{rem}

%%%%%%%%%%%%%%%%%%%%%%%%%%%%%%%%%%%%%%%%%%%%%%%%%%%%%%%%%5
\section{Near-field limit}
\setcounter{equation}{0}

In view of Theorem~\ref{teo1}, what is left to complete the proof of
Theorem~\ref{thm:main} is to show that the limit~\eqref{result} is
valid uniformly in sets of the form $\{|x|^2<\delta t\}$ for some
$\delta>0$. This is the goal of this section.
\begin{teo}\label{thm:inner.behavior}
Let $u$ be the solution to \eqref{problem}. There exists a value
$\delta>0$ such that
\begin{equation}
\label{eq:thm.inner.behavior}
\lim_{t\to\infty}t^{N/2}\|u(x,t)-M^*
\phi(x)\Gamma_\alpha(x,t)\|_{L^\infty(\{|x|^2<\delta t\})}=0.
\end{equation}
\end{teo}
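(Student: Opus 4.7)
The plan is to combine compactness for the rescaled solutions with matched asymptotics. For a sequence $t_n\to\infty$, set $w_n(x):=t_n^{N/2}u(x,t_n)$, which is uniformly bounded by~\eqref{decaimiento}. Iterating the representation formula~\eqref{eq:representation.formula} over $[t_n-k,t_n]$ yields a decomposition $w_n=e^{-k}\tilde w_n+S_{n,k}$, where $\tilde w_n$ is uniformly bounded and $S_{n,k}$ has uniformly bounded spatial gradient thanks to the smoothing by $\omega$ and~\eqref{decaimiento-W}. A diagonal Arzel\`a--Ascoli argument extracts a subsequence with $w_n\to w^*$ locally uniformly in $\R^N$. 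The time-shifted $W_n(x,\tau):=t_n^{N/2}u(x,t_n+\tau)$ share the same limit (since $(t_n+\tau)^{N/2}/t_n^{N/2}\to 1$), and passing to the limit in $\partial_\tau W_n=LW_n$ yields $Lw^*=0$ in $\Omega$, $w^*=0$ in $\H$, $0\le w^*\le C$.

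A rescaling argument in the spirit of the proof of~\eqref{asym-psi}, now applied directly to $w^*$, shows that $w^*$ admits a limit $\ell$ at infinity: the rescalings $w^*(k\,\cdot\,)$ converge, along subsequences, to bounded functions harmonic in $\R^N\setminus\{0\}$, hence constant, and the value of this constant is independent of the subsequence. Then $w^*-\ell\phi$ is bounded, $L$-harmonic in $\Omega$, vanishes in $\H$, and tends to $0$ at infinity; by the comparison argument that established uniqueness in the existence proof for~\eqref{stationary}, $w^*\equiv\ell\phi$.

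The value $\ell=M^*(4\pi\alpha)^{-N/2}$ is identified by matching with Theorem~\ref{teo1}. The first step is to refine the outer estimate: tracking the bound $I_4\le C\delta^{-N/2-1}t_0^{-N/2+1}$ in the proof of Theorem~\ref{teo1} shows that the outer convergence in fact holds uniformly on $\{|x|^2\ge c(t_0)\,t\}$ for some function $c(t_0)\to 0$ as $t_0\to\infty$. Evaluating along a sequence $x_n$ with $|x_n|\to\infty$ and $|x_n|^2/t_n\to 0$ chosen inside this extended region, the (extended) outer yields $w_n(x_n)\to M^*(4\pi\alpha)^{-N/2}$ (since $U_\alpha(x_n/\sqrt{t_n})\to U_\alpha(0)$), while from $w^*=\ell\phi$, $\phi(x_n)\to 1$ by~\eqref{cotas-fi}, together with a uniform-in-$n$ equicontinuity of $\{w_n\}$ (obtained by tracking the gradient bound on $S_{n,k}$ uniformly on $\R^N$), we deduce $w_n(x_n)-\ell\phi(x_n)\to 0$; hence $\ell=M^*(4\pi\alpha)^{-N/2}$. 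Since the limit is subsequence-independent, the full sequence converges: $w_n\to M^*(4\pi\alpha)^{-N/2}\phi$ locally uniformly.

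Finally, to promote local uniform convergence into uniform convergence on $\{|x|^2<\delta t\}$ for some $\delta>0$, combine the compact-set estimate (valid on $\{|x|\le R\}$ for every fixed $R$, using $t^{N/2}\Gamma_\alpha\to(4\pi\alpha)^{-N/2}$ uniformly on compacts) with the extended outer on the complementary strip $\{R\le|x|,\,|x|^2<\delta t\}$: the latter gives $t^{N/2}|u-M^*\Gamma_\alpha|\to 0$, and~\eqref{cotas-fi} provides $t^{N/2}M^*|\phi-1|\,\Gamma_\alpha\le CM^*|x|^{2-N}$, small for $R$ large. The main technical obstacles are the refined outer bound extending~\eqref{estimate-1} to $\{|x|^2\ge c(t_0)\,t\}$ with $c(t_0)\to 0$, and the uniform-in-$n$ gradient estimate on the $w_n$ needed to match across the intermediate scale $R\le|x|\le\sqrt{\delta t}$; both rely on careful bookkeeping in the representation formula~\eqref{eq:representation.formula}.
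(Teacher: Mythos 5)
Your proposal takes a compactness/rescaling route that is genuinely different from the paper's, but it has a gap at the decisive step: the matching across the intermediate scale. Your compactness argument yields $w_n\to\ell\phi$ only \emph{locally uniformly}, i.e.\ on compact sets $\{|x|\le R\}$, while your refined outer estimate holds on $\{|x|^2\ge c(t_0)\,t\}$. These two regions never overlap: for any fixed $t_0$ the strip $\{R\le|x|\le \sqrt{c(t_0)\,t}\}$ grows with $t$ and is covered by neither statement. Uniform-in-$n$ equicontinuity of $\{w_n\}$ does not bridge it --- equicontinuity controls $|w_n(x)-w_n(y)|$ for $|x-y|$ small, but says nothing about $w_n(x_n)-w^*(x_n)$ along $|x_n|\to\infty$, which is what your identification of $\ell$ requires. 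The conservation law cannot substitute either, since the region $\{|x|\le\delta\sqrt t\}$ carries asymptotically negligible mass. So both the identification $\ell=M^*(4\pi\alpha)^{-N/2}$ and the final upgrade to uniform convergence on $\{|x|^2<\delta t\}$ are unproved. There is also a secondary gap: you assert that $w^*$ (a bounded $L$-harmonic function in $\Omega$ vanishing in $\mathcal H$) has a limit at infinity that is independent of the rescaling subsequence. For $\psi=1-\phi$ the paper pins this down with a flux argument that needs the full derivative decay \eqref{derivative-psi} and the integrability of $\Delta\psi$ (Corollary~\ref{Lpsi}); you would have to redo that analysis for $w^*$, which is not automatic from boundedness.

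The paper avoids all of this by working directly at the scale $|x|^2\le\delta t$ with explicit barriers. It proves the equivalent statement \eqref{result2} with $\omega$ in place of $\Gamma_\alpha$, sets $v_\pm(x,t)=\phi(x)\omega(x,t)\pm K_\pm t^{-(N+\kappa)/2}(|x|^2+b)^{-\gamma}$, and shows (Lemma~\ref{lem:v.plus.minus}) that $v_+$ is a supersolution and $v_-$ a subsolution on the \emph{whole} annular region $R^2\le|x|^2\le\delta t$; the correction term is engineered, via Lemma~\ref{lemma:L.z} and the decay of $\nabla\phi$ and of $\omega$, to absorb the cross term $\int J(x-y)(\phi(y)-\phi(x))(\omega(y,t)-\omega(x,t))\,dy$. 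The outer Theorem~\ref{teo1} then supplies the ordering $u\le(1+\ep)M^*v_+$ on the moving boundary $(\delta t)^{1/2}\le|x|\le(\delta t)^{1/2}+1$, and the comparison principle on time-dependent domains (Theorem~\ref{thm:contraction.general}, Remark~\ref{rem:comparison.general}) propagates it inward. This is exactly the bridge across $R\le|x|\le\sqrt{\delta t}$ that your argument lacks; without a substitute for it (a barrier, a Harnack-type estimate at the intermediate scale, or an outer estimate valid down to $|x|\ge R$ fixed), the proposal does not close.
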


Instead of~\eqref{eq:thm.inner.behavior}, we will prove
\begin{equation}\label{result2}
\lim_{t\to\infty}t^{N/2}\|u(x,t)-M^*\phi(x)\omega(x,t)\|_{L^\infty(\{|x|^2<\delta
t\})}=0,
\end{equation}
which turns out to be equivalent, thanks to~\eqref{estima-W} with $|\beta|=0$.
Notice also that, using again~\eqref{estima-W}, together with the asymptotic behavior of $\phi$, we may rewrite~\eqref{estimate-1} in a similar way,
\begin{equation}\label{eq:outer.with.omega}
\lim_{t\to\infty}t^{N/2}\|u(x,t)-M^*\phi(x)\omega(x,t)\|_{L^\infty(\{|x|^2\ge\delta
t\})}=0.
\end{equation}

In order to prove~\eqref{result2} we will construct suitable barriers approaching the asymptotic limit as $t$ goes to infinity.
We choose
$\kappa\in(0,\min\{1,N-2\})$, $\gamma\in(0,(N-2-\kappa)/2)$, and
then define, for any $K_\pm>0$,
\begin{equation}
\label{eq:def.parabolic.barriers} v_\pm(x,t)=\phi(x)\omega(x,t)\pm
K_\pm t^{-\frac {N+\kappa}{2}}z(x),
\end{equation}
with $z$  as in Lemma~\ref{lemma:L.z}, $z(x)=(|x|^2+b)^{-\gamma}$.
We take $b$ large enough, so that \eqref{potencia} is satisfied.
\begin{lema}
\label{lem:v.plus.minus}
Let $R>0$ and $v_\pm$ as above. There exists a value $\delta_*>0$, independent of $K_\pm$, such that for all $\delta\in(0,\delta^*)$, $K_\pm\ge1$, we have
$$
\partial_tv_+-Lv_+\ge 0,\quad \partial_t v_{-}-Lv_-\le0, \qquad R^2\le |x|^2\le \delta t.
$$
\end{lema}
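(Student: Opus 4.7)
Write $v_\pm=\phi\omega\pm K_\pm t^{-(N+\kappa)/2}z(x)$ and split $(\partial_t-L)v_\pm=A(x,t)\pm K_\pm B(x,t)$, with $A:=(\partial_t-L)(\phi\omega)$ and $B:=(\partial_t-L)(t^{-(N+\kappa)/2}z)$. The goal is to show that $B$ is strictly positive on $R^2\le|x|^2\le\delta t$ and dominates $|A|/K_\pm$ there uniformly in $K_\pm\ge 1$, provided $\delta$ is small. Since $\omega$ satisfies $\partial_t\omega-L\omega=e^{-t}J$ on $\R^N$ and $L\phi=0$ in $\Omega$, a short manipulation yields
\begin{equation*}
A(x,t)=\phi(x)e^{-t}J(x)-\omega(x,t)L\phi(x)-\int_{\R^N}J(x-y)[\phi(y)-\phi(x)][\omega(y,t)-\omega(x,t)]\,dy.
\end{equation*}
Enlarging $R$ if necessary so that $\mathcal{H}\subset B(0;R-1)$, $R\ge 2$ and $R^2\ge b$, the first two terms vanish for $|x|\ge R$ ($J$ is supported in $B(0;1)$, and every $y$ with $|y-x|\le 1$ lies in $\Omega$, so $L\phi(x)=0$). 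Bounding the remaining commutator by the mean value theorem on $B(x;1)$, together with $|\nabla\phi(x)|\le C|x|^{-(N-1)}$ from \eqref{derivative-psi} and the \emph{parabolic} bound $\|\nabla\omega(\cdot,t)\|_\infty\le Ct^{-(N+1)/2}$ from \eqref{decaimiento-W}, yields
\begin{equation*}
|A(x,t)|\le \frac{C_0}{|x|^{N-1}}\,t^{-(N+1)/2},\qquad |x|\ge R,\ t\ge 1.
\end{equation*}

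\emph{Lower bound on $B$ and closing.} Lemma~\ref{lemma:L.z} and the hypothesis $\gamma<(N-2-\kappa)/2$ (hence $N-2-2\gamma>\kappa>0$) give
\begin{equation*}
B(x,t)\ge t^{-(N+\kappa)/2}(|x|^2+b)^{-(\gamma+1)}\Big[2\alpha\gamma(N-2-2\gamma)-\tfrac{N+\kappa}{2}\tfrac{|x|^2+b}{t}\Big].
\end{equation*}
On $|x|^2\le\delta t$ with $t\ge R^2/\delta\ge b/\delta$, $(|x|^2+b)/t\le 2\delta$, so choosing $\delta\le\delta_1:=\alpha\gamma(N-2-2\gamma)/(N+\kappa)$ forces $B(x,t)\ge c_1 t^{-(N+\kappa)/2}(|x|^2+b)^{-(\gamma+1)}$, with $c_1:=\alpha\gamma(N-2-2\gamma)>0$. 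Both sign conditions $(\partial_t-L)v_\pm\gtrless 0$ then reduce to $K_\pm c_1\ge C_0\, t^{(\kappa-1)/2}(|x|^2+b)^{\gamma+1}/|x|^{N-1}$. Using $t\ge|x|^2/\delta$ and $|x|^2+b\le 2|x|^2$ on $|x|\ge R\ge\sqrt b$, the right-hand side is bounded by $C_0\cdot 2^{\gamma+1}\,\delta^{(1-\kappa)/2}\,|x|^{-(N-2-\kappa-2\gamma)}\le C_0\cdot 2^{\gamma+1}\,\delta^{(1-\kappa)/2}\,R^{-(N-2-\kappa-2\gamma)}$, since $N-2-\kappa-2\gamma>0$ and $|x|\ge R$. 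Choosing
\begin{equation*}
\delta_*:=\min\Big\{\delta_1,\ \big(c_1\,R^{N-2-\kappa-2\gamma}/(C_0\,2^{\gamma+1})\big)^{2/(1-\kappa)}\Big\},
\end{equation*}
which is independent of $K_\pm$, concludes the argument for every $K_\pm\ge 1$.

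\emph{Main obstacle.} The crucial step is the commutator bound for $|A|$. One must use the parabolic decay \eqref{decaimiento-W}, not the spatial estimate \eqref{barriers}, for $\nabla\omega$, so that $|A|\lesssim t^{-(N+1)/2}$ decays strictly faster than the $t^{-(N+\kappa)/2}$ of the barrier (since $\kappa<1$). This rate gap, together with the factor $\delta^{(1-\kappa)/2}$ produced by $t\ge|x|^2/\delta$, is precisely what lets $\delta$-smallness absorb the commutator uniformly in $K_\pm\ge 1$.
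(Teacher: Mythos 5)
Your decomposition and all the quantitative ingredients are exactly those of the paper's proof: the commutator identity for $(\partial_t-L)(\phi\omega)$, the bound $|\phi(x)-\phi(y)|\lesssim |x|^{1-N}$ for $|x-y|\le 1$ coming from \eqref{derivative-psi}, the parabolic bound $\|\nabla\omega(\cdot,t)\|_{L^\infty}\lesssim t^{-(N+1)/2}$ from \eqref{decaimiento-W} rather than the spatial one, Lemma~\ref{lemma:L.z} for the lower bound on the barrier term, and the conversion of the rate gap $\kappa<1$ into a factor $\delta^{(1-\kappa)/2}$ via $t\ge |x|^2/\delta$. The final balancing and the choice of $\delta_*$ are also the same (the paper uses monotonicity of $(r^2+b)^{\gamma+1}r^{-(N-\kappa)}$ instead of $|x|^2+b\le 2|x|^2$, an immaterial difference).

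There is, however, one genuine gap: the step ``enlarging $R$ if necessary so that $\mathcal{H}\subset B(0;R-1)$, $R\ge 2$ and $R^2\ge b$.'' The lemma is asserted for a \emph{given} $R>0$, and in its application (proof of Theorem~\ref{thm:inner.behavior}) $R$ is chosen with $B(0;R)\subset\mathcal{H}$, hence small; the differential inequality is needed on all of $\Omega\cap\{|x|^2\le\delta t\}$, in particular at points of $\Omega$ adjacent to $\partial\mathcal{H}$. By enlarging $R$ you prove the inequality only on the smaller region $\{|x|\ge R'\}$ and lose precisely the annulus next to the hole where the comparison argument needs it. The enlargement is not cosmetic in your write-up: it is what lets you invoke the mean value theorem on $B(x;1)$ for $\phi$, and that step genuinely fails near $\partial\mathcal{H}$, since $\phi$ is discontinuous across $\partial\Omega$ and \eqref{derivative-psi} is only valid in $\Omega$. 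The repair is the one the paper uses: for $|x|\le 2$ simply bound $|\phi(x)-\phi(y)|\le 1\le 2^{N-1}|x|^{-(N-1)}$ (so the estimate $|\phi(x)-\phi(y)|\le C|x|^{-(N-1)}$ holds for \emph{all} $x$ with $|x-y|\le1$, no smoothness of $\phi$ on the segment required); keep $x\in\Omega$ so that $L\phi(x)=0$ (the inequality is only claimed, and only used, there); and handle $\phi(x)e^{-t}J(x)$ by noting it is nonnegative (harmless for $v_+$) and bounded by $Ce^{-t}$, which is absorbed by the barrier term for $v_-$. With these changes your argument goes through for the original $R$.
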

\begin{proof}
On one hand,
$$
\partial_t v_+(x,t)=\phi(x)\partial_t \omega(x,t)-\frac {N+\kappa}{2t}K_+t^{-\frac {N+\kappa}{2}}z(x).
$$
On the other hand, using that $\phi$ is $L$-harmonic and \eqref{potencia}, we get
$$
\begin{aligned}
Lv_+(x,t)\leq &\phi(x)L\omega(x,t)+\int_{\mathbb{R}^N} J(x-y)\big(\phi(y)-\phi(x)\big)\big(\omega(y,t)-\omega(x,t)\big)\,dy\\
&\qquad-\frac{2\alpha\gamma
(N-2-2\gamma)}{|x|^2+b}K_+t^{-\frac{N+\kappa}{2}}z(x).
\end{aligned}
$$
Hence, since $\omega$ solves~\eqref{eq-W} and $\phi\ge0$,
$$
\begin{aligned}
\partial_t v_+-Lv_+&\geq-\underbrace{\int_{\mathbb{R}^N} J(x-y)\big|\phi(y)-\phi(x)\big|\big|\omega(y,t)-\omega(x,t)\big|\,dy}_{\mathcal{A}}\\
&+K_+t^{-\frac{N+\kappa}{2}}z(x)\underbrace{\left(\frac{2\alpha\gamma
(N-2-2\gamma)}{|x|^2+b}-\frac {N+\kappa}{2t}\right)}_{\mathcal{B}}.
\end{aligned}
$$

\noindent\emph{Estimate for $\mathcal{A}$. } We first notice that
there is a constant $C>0$ such that
$$
|\phi(x)-\phi(y)|\le \frac{C}{|x|^{N-1}} \qquad\text{if }|x-y|\le1.
$$
Indeed, since $|\phi(x)-\phi(y)|\le 1$, this is obviously true for
$|x|\le2$, with $C=2^{N-1}$. On the other hand, using Taylor's
expansion,
\begin{equation}
\label{eq:difference.phi}
|\phi(x)-\phi(y)|\le \max_{|\beta|=1}\max_{\xi\in
B(x;1)}|D^\beta\phi(\xi)|\qquad\text{if }|x-y|\le1.
\end{equation}
The result for $|x|\ge2$ now follows from the
estimate~\eqref{derivative-psi} with $|\beta|=1$, since $|\xi|\ge
|x|/2$ for all $\xi\in B(x;1)$ whenever $|x|\ge2$.

Combining~\eqref{eq:difference.phi} with~\eqref{decaimiento-W}, we
conclude that there is a constant $C>0$ such that
$$
\mathcal{A}\le
\frac{Ct^{-\frac{N+1}2}}{|x|^{N-1}}=\frac{Ct^{-\frac{N+\kappa}2}(|x|^2/t)^{\frac{1-\kappa}2}}{|x|^{N-\kappa}}
\le\frac{Ct^{-\frac{N+\kappa}2}\delta^{\frac{1-\kappa}2}}{|x|^{N-\kappa}}\qquad\text{if
}|x|^2\le\delta t.
$$

\noindent\emph{Estimate for $\mathcal{B}$. } If
$R^2\le|x|^2\le\delta t$ and $
\delta\le\frac{\alpha\gamma(N-2-2\gamma)R^2}{(N+\kappa)(R^2+b)}$, we
have
$$
\frac{N+\kappa}{2t}\le\frac{\delta(N+\kappa)}{|x|^2}=\frac{\delta(N+\kappa)(R^2+b)}{R^2|x|^2+b|x|^2}\le
\frac{\delta(N+\kappa)(R^2+b)}{R^2(|x|^2+b)}\le
\frac{\alpha\gamma(N-2-2\gamma)}{|x|^2+b}.
$$
Thus, with this choice of $\delta$,
$$
\mathcal{B}\ge\frac{\alpha\gamma(N-2-2\gamma)}{|x|^2+b}.
$$

The above estimates for $\mathcal{A}$ and $\mathcal{B}$ yield, if $K_+\ge1$,
$$
\partial_t v_+-Lv_+\ge \frac{t^{-\frac{N+\kappa}2}}{(|x|^2+b)^{\gamma+1}}\left(\alpha
\gamma(N-2-2\gamma)-\frac{C\delta^{\frac{1-\kappa}2}(|x|^2+b)^{\gamma+1})}{|x|^{N-\kappa}}\right).
$$
On the other hand, since $2(\gamma+1)\le N-\kappa$,
$$
\frac{(|x|^2+b)^{\gamma+1}}{|x|^{N-\kappa}}\le \frac{(R^2+b)^{\gamma+1}}{R^{N-\kappa}}.
$$
Therefore, taking $\delta$ small enough, we get
$$
\partial_tv_+-Lv_+\ge 0, \qquad R^2\le |x|^2\le \delta t
$$
for all $K_+\ge1$.
Notice that the threshold value of $\delta$ does not depend on $K_+$.

An analogous argument leads to the statement concerning $v_-$ since $e^{-t}J(x)\le C t^{\frac{N+1}2}|x|^{1-N}$.
\end{proof}

\begin{proof}[Proof of Theorem~\ref{thm:inner.behavior}. ]
Let $R>0$ such that $B(0;R)\subset \mathcal{H}$. As a consequence of Lemma~\ref{lem:v.plus.minus}, we know that there is a value $\delta>0$ such that
$$
\partial_tv_+-Lv_+\ge 0, \qquad x\in\Omega\cap\{ |x|^2\le \delta t\},\ t>0.
$$
Let now $\ep>0$. By \eqref{eq:outer.with.omega},~\eqref{estima-W} with $|\beta|=0$, and the asymptotic behavior of $\phi$, there exists $t_0>0$ such that
$
u(x,t)\le (1+\ep)M^*\phi(x)\omega(x,t)$ if $(\delta t)^{1/2}\le |x|\le(\delta t)^{1/2}+1$, $t\ge t_0$.
Hence,
for any $K_+\ge0$,
$$
u(x,t)\le (1+\ep)M^* v_+(x,t)\quad\mbox{if }(\delta t)^{1/2}\le |x|\le(\delta t)^{1/2}+1,\ t\ge t_0.
$$
On the other hand, it is obvious that there exists $K_+\ge1$ such that $u(x,t_0)\le (1+\ep)M^* v_+(x,t_0)$ if $|x|\le (\delta t_0)^{1/2}+1$. Finally,
$
u(x,t)=0\le (1+\ep)M^*v_+(x,t)$, $x\in\mathcal{H}$, $t\ge t_0$.
Therefore, by comparison, see Theorem~\ref{thm:contraction.general} and Remark~\ref{rem:comparison.general},
$$
u(x,t)\le (1+\ep)M^* v_+(x,t),\quad|x|\le  (\delta t)^{1/2},\ t\ge t_0.
$$
Hence, using the decay estimate~\eqref{decaimiento-W}, if $|x|\le  (\delta t)^{1/2}$, $t\ge t_0$, we have
$$
\begin{array}{rcl}
t^{N/2}\left(u(x,t)-M^*\phi(x)\omega(x,t)\right)&\le& \ep M^*\phi(x)t^{N/2}\omega(x,t)+(1+\ep)M^*K_+t^{-\frac\kappa2}z(x)\\
&\le&\ep M^*C+(1+\ep)M^*K_+t^{-\frac\kappa2}b^{-\gamma}.
\end{array}
$$
Letting $t\to\infty$ and then $\ep\to0$, we conclude that
$\limsup\limits_{t\to\infty}t^{N/2}\left(u(x,t)-M^*\phi(x)\omega(x,t)\right)\le 0$.

An analogous argument shows that
$$
 (1-\ep)M^* v_-(x,t)\le u(x,t),
 \quad|x|\le  (\delta t)^{1/2},\ t\ge t_0.
$$
Hence, $\liminf\limits_{t\to\infty}t^{N/2}\left(u(x,t)-M^*\phi(x)\omega(x,t)\right)\ge0$.
\end{proof}

As a corollary, we obtain the  behavior on compact sets.
\begin{coro} Let  $u$ be the solution to \eqref{problem}. Then,
\begin{equation}
\label{eq:inner.limit}
t^{N/2}u(x,t)\to \frac{M^*}{(4\pi\alpha)^{N/2}}\ \phi(x)\quad\mbox{uniformly in compact subsets of }\R^N.
\end{equation}
\end{coro}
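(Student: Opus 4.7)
The plan is to combine Theorem~\ref{thm:inner.behavior} with the pointwise behavior of the Gaussian profile $\Gamma_\alpha$ on compact sets. Let $K\subset\R^N$ be a compact set. Since $K$ is bounded, there exists $t_0$ such that $K\subset\{|x|^2<\delta t\}$ for all $t\ge t_0$, where $\delta>0$ is the constant furnished by Theorem~\ref{thm:inner.behavior}. Therefore the statement
$$
t^{N/2}\|u(x,t)-M^*\phi(x)\Gamma_\alpha(x,t)\|_{L^\infty(K)}\to0\quad\text{as }t\to\infty
$$
follows directly from~\eqref{eq:thm.inner.behavior}.

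It only remains to show that $t^{N/2}M^*\phi(x)\Gamma_\alpha(x,t)$ converges uniformly on $K$ to the claimed limit. Recalling the explicit form
$$
t^{N/2}\Gamma_\alpha(x,t)=(4\pi\alpha)^{-N/2}\,e^{-|x|^2/(4\alpha t)},
$$
and using that $|x|$ is bounded on $K$, the exponential factor converges uniformly to $1$ as $t\to\infty$. Since $\phi$ is bounded (indeed $0\le\phi\le 1$), it follows that
$$
t^{N/2}M^*\phi(x)\Gamma_\alpha(x,t)\to \frac{M^*}{(4\pi\alpha)^{N/2}}\phi(x)\quad\text{uniformly on }K.
$$
The conclusion then follows by the triangle inequality. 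There is no real obstacle here: the corollary is essentially a restatement of Theorem~\ref{thm:inner.behavior} specialized to compact sets, where the self-similar profile $U_\alpha(x/t^{1/2})$ collapses to its value at the origin.
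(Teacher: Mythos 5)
Your argument is correct and is exactly the intended one: the paper leaves this corollary without proof, and its introduction already notes that $\lim_{t\to\infty}t^{N/2}\Gamma_\alpha(x,t)=(4\pi\alpha)^{-N/2}$ uniformly on compact sets, which combined with Theorem~\ref{thm:inner.behavior} gives the statement just as you describe. Nothing is missing.
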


\begin{rem}The limit~\eqref{eq:inner.limit} holds uniformly in sets of the form $\Omega\cap\{ |x|^2\le \delta(t)t\}$, as long as $\lim\limits_{t\to\infty}\delta(t)=0$.
\end{rem}
%%%%%%%%%%%%%%%%%%%%%%%%%%%%%%%%%%%%%%%%%%

\end{document}